\def\aa#1{ \begin{align*} #1 \end{align*} }
\def\aaa#1{ \begin{align} #1 \end{align} }
\def\mm#1{ \begin{multline*} #1 \end{multline*} }
\def\mmm#1{ \begin{multline} #1 \end{multline} }
\def\begeq{\begin{equation} \begin{cases}} 
\def\endeq{ \end{cases} \end{equation}}
\def\eq#1{ \begeq #1 \endeq }
\def\bege{\begin{equation*} \begin{cases}} 
\def\ende{ \end{cases} \end{equation*}}
\def\eqq#1{ \bege #1 \ende}
\def\eqn#1{ \begin{equation} \begin{aligned} #1 \end{aligned}\end{equation}}
\newtheorem{thm}{\sc Theorem}
\newtheorem{lem}{\sc Lemma}
\newtheorem{rem}{\sc Remark}
\newtheorem*{thm*}{\sc Theorem}
\newcommand{\pl}{\partial}
\newcommand{\gt}{\geqslant}
\newcommand{\lt}{\leqslant}
\newcommand{\te}{\theta}
\newcommand{\sub}{\subset}
\newcommand{\dl}{\delta}
\newcommand{\al}{\alpha}
\newcommand{\gm}{\gamma}
 \newcommand{\Gm}{\Gamma}
 \newcommand{\Dl}{\Delta}
 \newcommand{\la}{\lambda}
 \newcommand{\sg}{\sigma}
\newcommand{\mc}{\mathcal}
\newcommand{\C}{{\rm C}}
\newcommand{\td}{\tilde}
\newcommand{\x}{\times}
\newcommand{\mto}{\mapsto}
\newcommand{\rf}{\eqref}
\newcommand{\bi}{\begin{itemize}}
\newcommand{\ei}{\end{itemize}}
\newcommand{\be}{\begin{enumerate}}
\newcommand{\ee}{\end{enumerate}}
\newcommand{\biz}{\begin{itemize}[leftmargin=*]}
\newcommand{\bc}{\begin{center}}
\newcommand{\ec}{\end{center}}
\newcommand{\ovl}{\overline}
\newcommand{\F}{{\mathbb F}}
\newcommand{\G}{{\mathbb G}}
\DeclareMathOperator{\ind}{\mathbbm{1}}
\newcommand{\lap}{\Delta}
\newcommand{\lb}{\label}
\newcommand{\fdot}{\,\cdot\,}
\newcommand{\tet}{\vartheta}
\def\Rnu{{\mathbb R}}
\def\Nnu{{\mathbb N}}
\def\ffi{\varphi}
\def\com#1{}
\long\def\symbolfootnote[#1]#2{\begingroup%
\def\thefootnote{\fnsymbol{footnote}}\footnote[#1]{#2}\endgroup}
\titleformat{\section}[hang]{\large\bfseries}{\thesection.}{1ex}{}{}
\titleformat{\subsection}[hang]{\normalsize\bfseries}{\thesubsection}{2ex}{}{}
\titleformat{\subsubsection}[hang]{\small\bfseries}{\thesubsubsection}{2ex}{}{}
\begin{document}

%\title[Componentwise positive solutions  to parabolic systems]
\title[Solutions with positive components to quasilinear parabolic systems]
%{Componentwise positive solutions \\ to parabolic systems}
{Solutions with positive components \\ to quasilinear parabolic systems}

\author{Evelina Shamarova}
\address{Departamento de Matem\'atica, Universidade Federal da Para\'iba, Jo\~ao Pessoa, Brazil}
\email{evelina.shamarova@academico.ufpb.br}

\maketitle

\vspace{-10mm}

\begin{abstract}
We obtain sufficient conditions for the existence and uniqueness of solutions 
with non-negative components
to general quasilinear parabolic problems 
\eq{
\lb{P}
 \pl_t u^k =  \sum_{i,j=1}^n a_{ij} (t,x,u)\pl^2_{x_i x_j}\!u^k 
+ \sum_{i=1}^n b_i (t,x,u, \pl_x u)\pl_{x_i}\! u^k
%\hspace{7mm} 
+\, c^k(t,x,u,\pl_x u), \\
u^k(0,x) = \ffi^k(x), \\
u^k(t,\fdot) = 0, \quad \text{on } \pl \F,\\
 k=1,2, \dots, m, \quad x\in\F, \;\; t>0.
\tag{$P$}
}
Here, $\F$ is either a bounded domain or $\Rnu^n$; in the 
latter case, we disregard the boundary condition. We apply our results
to study the existence and asymptotic behavior of componentwise non-negative solutions to  
the Lotka-\!Volterra competition model with diffusion. In particular,
we show the convergence, as $t\to+\infty$, of the solution for a 2-species 
Lotka-\!Volterra model, whose coefficients vary in space and  time, 
 to a solution of the associated elliptic problem.

\end{abstract}

\vspace{3mm}

{\footnotesize
{\noindent \bf Keywords:} 
Quasilinear parabolic PDEs; Solutions with non-negative components;
Lotka-\!Volterra competition model with diffusion

\vspace{1mm}

{\noindent\bf 2020 MSC:} 
35K45, 35K59, 92D99  

%% 35K45  	Initial value problems for second-order parabolic systems
%% 35K59  	Quasilinear parabolic equations
%% 92D99  	Biology and other natural sciences: None of the above, but in this section
}

\section{Introduction}
Second-order parabolic systems arise in many branches of mathematics
and mathematical physics, so they have received a lot of attention in the literature
\cite{amann1990, lady, paoruan2007,paoruan2010, paoruan2013, SP, wang-wang, zhang}.
The main concerns observed in these works include 
the global existence of solution,  the non-negativity of the solution components,
and the asymptotic  behavior of the solution.

%The important class of parabolic problems constitute those
%whose components remain non-negative over time.

\subsection{Motivation}

In many parabolic problems arising in biology, physics, and chemistry, such as 
convection-reaction-diffusion systems,
 only solutions with non-negative components can be physically of interest.  
 Such systems may describe population densities, pressure or concentrations of nutrients and chemicals. Models that do not guarantee the non-negativity of the solution components may not be valid models or have limitations.

One of the most important and well-studied models in this context is 
 the Lotka-\!Volterra competition model with diffusion which arises 
 in mathematical biology and population dynamics.

 A two species Lotka-\!Volterra model, currently regarded as a standard
 model,  has received a significant consideration
in the literature since the 80s up to recent years 
\cite{cosner1984,he2016-2, he2016-1, he2017, Korman1987, Lou2019, Pao2015}. 
The parabolic system in the basis of this model reads
\eq{
\lb{lv1111}
\pl_t u = d_1 \lap_x u + u(\beta- \gm u - \dl v),\\
\pl_t v = d_2 \lap_x v + v(\rho - \sg u - \te v),
\tag{$LV$}
}
where $d_1,d_2>0$ are constants and $\beta,\gm,\dl,\sg,\te$ and $\rho$ are functions 
$[0,+\infty) \x \F \to [0,+\infty)$.  The above system is usually complemented with Dirichlet, Neumann, or Navier boundary conditions.

The $m$-species model with $m\gt 2$ was also considered by some authors
\cite{paoruan2017}.

\subsection{Statements of the main results}
In this work, we obtain  sufficient conditions for the existence and uniqueness of solutions,
to problem \rf{P}, with non-negative components, as well as sufficient conditions 
for the components of the existing
solution to \rf{P}  remain non-negative over time. 
We consider problem \rf{P} as an initial-boundary
value problem with the Dirichlet boundary condition, as well as a Cauchy problem.
To the best of our knowledge, the question of the non-negativity of solution
 components for such a general class of parabolic systems, as \rf{P}, is addressed
 for the first time. We state our main results.
\begin{thm}
\lb{l1111}
Let $\G\sub\Rnu^n$ be an open domain and
$u(t,x)$ be a $\C^{1,2}_b([0,T]\x\ovl \G)$-solution to the equation in 
\rf{P}. Assume there exists a constant $\kappa>0$ such that
for all $(t,x)\in [0,T]\x \G$,
$A(t,x,u) \gt \kappa\, I$, 
where $I\in\Rnu^{n\x n}$ is the identity matrix and $A=\{a_{ij}\}_{i,j=1}^n$.
Further assume that the coefficients of the equation in \rf{P} 
and the derivatives $\pl_x a_{ij}$ and $\pl_u a_{ij}$
are continuous in all arguments and that
 for some $k\in \{1,\ldots, m\}$,
\bi
 \item[(i)] $u^k(0,x) \gt 0$ on $\G$;
 \item[(ii)] $c^k(t,x,v,p)|_{v^k=0} \gt 0$ for $(x,t,v,p)\in [0,T]\x\ovl \G\x \Rnu^n\x \Rnu^{m\x n}$;
 \item[(iii)] $\pl_{u_k}\! c^k$ exists and is bounded on 
 \begin{center}
 $\mc R(\G, C_1,C_2):= [0,T]\x \G \x \{|u|\lt C_1\}\x \{|p|\lt C_2\}$
 \end{center}
 for some constants $C_1,C_2>0$.  
 \ei
 Then, $u^k(t,x) \gt 0$ on $[0,T]\x \G$.
\end{thm}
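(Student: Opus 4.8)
My plan is to read the conclusion as a parabolic minimum principle for the single $k$-th component and to prove it by freezing the known solution inside the coefficients. Fix the index $k$ supplied by the hypotheses. Since $u$ is an already given $\C^{1,2}_b$ solution, I would set $\tilde a_{ij}(t,x)=a_{ij}(t,x,u(t,x))$ and $\tilde b_i(t,x)=b_i(t,x,u(t,x),\pl_x u(t,x))$; by the assumed continuity and the boundedness of $u$ and $\pl_x u$ these are bounded continuous coefficients, and $\tilde A=\{\tilde a_{ij}\}$ stays uniformly elliptic, $\tilde A\gt\ka I$. Writing $w=u^k$ and $L=\sum_{i,j}\tilde a_{ij}\pl^2_{x_ix_j}+\sum_i\tilde b_i\pl_{x_i}$, the $k$-th equation becomes $\pl_t w=Lw+c^k(t,x,u,\pl_x u)$, a scalar equation that is linear in $w$ except through the reaction term.

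The key algebraic step is to linearise that reaction term using (ii) and (iii). For fixed $(t,x)$ the fundamental theorem of calculus along the segment joining $(u^1,\dots,0,\dots,u^m)$ to $u$ in the $k$-th slot gives
\[
c^k(t,x,u,\pl_x u)=\underbrace{c^k(t,x,u,\pl_x u)\big|_{u^k=0}}_{=:\,g(t,x)}+\Big(\int_0^1\pl_{u_k}c^k\big(t,x,u^1,\dots,s\,u^k,\dots,u^m,\pl_x u\big)\,ds\Big)\,w .
\]
Because $u$ and $\pl_x u$ are bounded, the whole segment lies in a region $\mc R(\G,C_1,C_2)$ with $C_1,C_2$ large enough, so (iii) makes the bracketed factor $\tilde c(t,x)$ bounded, while (ii) gives $g\gt0$. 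Thus $w$ satisfies the linear scalar identity $\pl_t w-Lw-\tilde c\,w=g\gt0$. To neutralise the indefinite sign of $\tilde c$, I would substitute $v=e^{-\la t}w$ with $\la>\sup|\tilde c|$; then $v$ solves $\pl_t v-Lv+(\la-\tilde c)\,v=e^{-\la t}g\gt0$ with zeroth-order coefficient $d:=\la-\tilde c\gt\delta_0>0$, and since $u^k=e^{\la t}v$ it suffices to prove $v\gt0$.

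For bounded $\G$ the minimum principle then finishes the argument: $v$ attains its minimum on the compact set $[0,T]\x\ovl\G$, and if that minimum were negative it would be attained at some interior point with $t_0>0$, the initial values being $\gt0$ by (i) and the lateral values being $0$ by the Dirichlet condition of \rf{P}. At such a point $\pl_t v\lt0$, $\nab v=0$ and $D^2v\gt0$; since $\tilde A\gt\ka I\gt0$ the elliptic term $\sum\tilde a_{ij}\pl^2_{ij}v\gt0$ there, so $\pl_t v-Lv+d\,v$ would be strictly negative, contradicting $e^{-\la t}g\gt0$. Hence $v\gt0$.

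The main obstacle is the Cauchy case $\G=\Rnu^n$, where the infimum need not be attained. Here I would use that $v$ is bounded (as $u\in\C^{1,2}_b$) and add the barrier $\Phi(t,x)=|x|^2+Kt$: a direct computation gives
\[
\pl_t\Phi-L\Phi+d\,\Phi=K-2\tr\tilde A-2\,\tilde b\!\cdot\! x+d\big(|x|^2+Kt\big),
\]
and since $d\gt\delta_0>0$ the quadratic term dominates the linear one, so choosing $K$ large makes $\Phi$ a strict supersolution. For each $\eps>0$ the function $v_\eps=v+\eps\Phi$ satisfies the same inequality, has nonnegative initial data, and tends to $+\infty$ as $|x|\to\infty$ uniformly on $[0,T]$; its minimum is therefore attained at a finite point, where the interior computation above yields $v_\eps\gt0$, and letting $\eps\to0$ gives $v\gt0$ and hence $u^k\gt0$ on $[0,T]\x\G$. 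The points I expect to need the most care are matching the range of the solution with the constants $C_1,C_2$ in (iii) so that $\tilde c$ is genuinely bounded, and, in the whole-space case, the uniform coercivity of the barrier together with the boundary instant $t_0=T$, where only the one-sided bound $\pl_t v_\eps\lt0$ is available.
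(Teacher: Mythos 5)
Your argument takes a genuinely different route from the paper's. Both proofs start from the same linearization of the reaction term, $c^k=c^k|_{u^k=0}+u^k\int_0^1\pl_{u_k}c^k(\ldots,s\,u^k,\ldots)\,ds$, with (ii) giving the sign of the first summand and (iii) plus the boundedness of $(u,\pl_x u)$ giving a bounded zeroth-order coefficient. From there the paper multiplies the equation by $\eta^2u^k_-$ for a cutoff $\eta$ supported in an arbitrary ball $\ovl B\sub\G$, integrates by parts, and closes an energy inequality for $\int_B\eta^2(u^k_-)^2\,dx$ via Gronwall; you instead freeze the coefficients along the given solution, tilt by $e^{-\la t}$ to make the zeroth-order coefficient strictly positive, and run the classical pointwise minimum principle, adding the barrier $\eps(|x|^2+Kt)$ when $\G=\Rnu^n$. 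Your interior-minimum computation (the trace inequality $\sum\tilde a_{ij}\pl^2_{x_ix_j}v\gt0$ at a minimum, the strict contradiction coming from $d\,v<0$ there) and your barrier estimate (with $d\gt\delta_0>0$ absorbing $-2\tilde b\cdot x$ after completing the square) are sound, and the whole-space case is handled cleanly.

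The one genuine gap is at the lateral boundary in the bounded case. The theorem assumes only that $u$ solves \emph{the equation} in $(P)$ on an open domain $\G$; no condition on $[0,T]\x\pl\G$ is among the hypotheses, and the paper later applies the theorem to auxiliary functions such as $\pl_t u$, $-\pl_t v$ and $m_k-u^k$, whose lateral traces are not the Dirichlet data of $(P)$. Your proof excludes a negative minimum on $[0,T]\x\pl\G$ by invoking ``the Dirichlet condition of $(P)$,'' which amounts to silently adding the hypothesis $u^k\gt0$ on the lateral boundary; the minimum principle cannot close without some such information (take $\pl_t u=\lap u+1$ with zero initial data on a ball $\G$ sitting inside a larger ball on which strongly negative lateral data are imposed: all of (i)--(iii) hold on $\G$, yet $u$ goes negative). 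To prove the statement in the generality in which it is used, you must either make $u^k\gt0$ on $[0,T]\x\pl\G$ an explicit assumption and verify it in each application, or localize to balls compactly contained in $\G$ so that no boundary information is needed --- the latter being exactly what the paper's cutoff-and-Gronwall argument is designed to achieve.
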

In the next theorem, we assume conditions
that guarantee the existence and uniqueness
of solution to the initial-boundary value problem \rf{P} (see \cite{SP}).
These conditions are listed in Subsection \ref{notation-assumptions} 
 as \ref{A1}--\ref{A6}. However, the main assumption that guarantees
the non-negativity of the solution components reads as follows:
\be
[label=($A_\arabic*$)]
\setcounter{enumi}{6}
\item \lb{A7} 
For all $k\in \{1,\ldots, m\}$,
\bi
\item[(a)]  $\ffi^k_0\gt 0 \;\; \text{on} \; \F$;
\item[(b)] $c^k(t,x,u,p)|_{u^k=0} \gt 0$ on regions $[0,T]\x \F \x \{u^i\gt 0, i\ne k\}\x \Rnu^{m\x n}.$
\ei
\ee
\begin{thm}
\lb{th1111}
Assume \ref{A1}--\ref{A6} (see Subsection \ref{notation-assumptions}).
Further assume that \ref{A7} holds and that $\pl_u c$
is bounded on regions $\mc R(\F,C_1,C_2)$.
Then, there exists a 
$\C^{1+\frac{\al}2,2+\al}([0,T]\x \F)$-solution to problem \rf{P}, 
$\al\in (0,1)$, with non-negative components, 
which is unique in the class $\C^{1,2}([0,T]\x \F)$.
\end{thm}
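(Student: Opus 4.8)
The plan is to deduce \textbf{Theorem~\ref{th1111}} from the single--component criterion of Theorem~\ref{l1111}. The difficulty is that Theorem~\ref{l1111}(ii) requires $c^k|_{u^k=0}\gt 0$ for \emph{all} values of the remaining unknowns, whereas \ref{A7}(b) only guarantees this on the set where $u^i\gt 0$ for every $i\ne k$. One therefore cannot apply Theorem~\ref{l1111} to the solution of \rf{P} directly: the hypothesis one would need is that the \emph{other} components stay non-negative, which is precisely part of what must be proved. I would break this circularity by passing to an auxiliary system whose reaction terms are frozen at the positive parts of the off--diagonal components, so that hypothesis (ii) holds with no sign restriction, and then show that the non-negative solution of the auxiliary system in fact solves \rf{P}.

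Concretely, writing $s_+=\max(s,0)$, for each $k\in\{1,\dots,m\}$ set
\[
\bar c^{\,k}(t,x,u,p)\eqdef c^k\bigl(t,x,\,u^1_+,\dots,u^{k-1}_+,\,u^k,\,u^{k+1}_+,\dots,u^m_+,\,p\bigr),
\]
i.e.\ every component except the $k$-th is replaced by its positive part while the $k$-th slot is left intact. Two features make this choice work. First, at $u^k=0$ all the arguments of $c^k$ other than the $k$-th are non-negative, so \ref{A7}(b) gives $\bar c^{\,k}(t,x,u,p)|_{u^k=0}\gt 0$ for \emph{all} $(u,p)$; this is exactly hypothesis (ii) of Theorem~\ref{l1111}, now unconditional. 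Second, since the $k$-th slot is untouched, $\pl_{u_k}\bar c^{\,k}=(\pl_{u_k}c^k)(t,x,u^1_+,\dots,u^k,\dots,p)$ still exists and is bounded on the regions $\mc R(\F,C_1,C_2)$ — the truncated argument has norm at most $C_1$ whenever $|u|\lt C_1$, so it stays in the region where $\pl_u c$ is assumed bounded — which is hypothesis (iii). The remaining hypotheses of Theorem~\ref{l1111} are inherited from \rf{P}: the ellipticity $A(t,x,u)\gt\kappa I$ and the continuity of the coefficients together with $\pl_x a_{ij},\pl_u a_{ij}$ are untouched since we modified only $c$, and hypothesis (i) is \ref{A7}(a).

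With $\bar c$ in place of $c$ I would first invoke the existence theory of \cite{SP} — the modification $c\mapsto\bar c$ replaces the $u$-dependence of the reaction by a composition with the globally Lipschitz maps $u^i\mapsto u^i_+$, hence preserves the structural conditions \ref{A1}--\ref{A6} — to obtain a $\C^{1+\frac{\al}2,2+\al}([0,T]\x\F)$ solution $\bar u$ of the auxiliary problem. Applying Theorem~\ref{l1111} to each index $k$ (with $\G=\F$, the homogeneous Dirichlet data being trivially non-negative) then yields $\bar u^{\,k}\gt 0$ on $[0,T]\x\F$ for all $k$. Once every component of $\bar u$ is non-negative we have $(\bar u^{\,i})_+=\bar u^{\,i}$, so the truncations are inactive and $\bar u$ solves the \emph{original} system \rf{P}; this furnishes the desired solution with non-negative components. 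Uniqueness in the class $\C^{1,2}([0,T]\x\F)$ is then the uniqueness statement of \cite{SP} under \ref{A1}--\ref{A6} applied to \rf{P}.

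The step I expect to be the main obstacle is the verification that the truncated problem still falls under the existence theory of \cite{SP}, i.e.\ that composing the reaction with $u\mapsto u_+$ does not destroy \ref{A1}--\ref{A6}. If those assumptions tolerate only Lipschitz (rather than $C^1$) dependence of the coefficients on $u$ — the generic situation for such existence results — the argument above is complete as stated. If instead $C^1$ regularity in $u$ is genuinely required, I would replace $s_+$ by a smooth non-negative cut-off $\sg_\dl$ with $\sg_\dl(s)=s$ for $s\gt\dl$, obtain non-negative solutions $\bar u_\dl\gt 0$ as above, and pass to the limit $\dl\to 0$ using the a priori $\C^{1+\frac{\al}2,2+\al}$-bounds supplied by \ref{A1}--\ref{A6}; the limit is non-negative and solves \rf{P}. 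Care is only needed to keep all the constants ($\kappa$, the bound on $\pl_u c$, and the Schauder constants) independent of $\dl$, which they are, since the modification affects neither the principal part nor the $k$-th slot of each $c^k$.
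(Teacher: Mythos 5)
Your central idea---replace the arguments of $c^k$ other than the $k$-th by non-negative truncations so that \ref{A7}(b) yields hypothesis (ii) of Theorem \ref{l1111} unconditionally, prove non-negativity for the modified system, and then observe that the truncation is inactive---is exactly the idea of the paper's proof (there the truncation is $|v^i|$ in every slot rather than $v^i_+$ in the off-diagonal ones; your variant is in fact slightly cleaner for hypothesis (iii)). The gap is in how you solve the modified problem. You keep the full quasilinear structure and re-invoke the existence theory of \cite{SP} for the truncated reaction, asserting that $c\mapsto\bar c$ ``preserves the structural conditions \ref{A1}--\ref{A6}.'' It does not: \ref{A5} requires $c$ to possess \emph{continuous first-order partial derivatives} in $u$, which the Lipschitz map $u^i\mapsto u^i_+$ destroys at $u^i=0$; moreover \ref{A2} is a bound on the pairing $(c(t,x,u,p),u)$ over all of $\Rnu^m$, and since your truncation differs with $k$, the quantity $\sum_k c^k(t,x,u^1_+,\dots,u^k,\dots,u^m_+,p)\,u^k$ is not of the form $(c(t,x,v,p),v)$ for any single $v$, so \ref{A2} for $\bar c$ does not follow from \ref{A2} for $c$. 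Your fallback (smooth cut-offs $\sg_\dl$ plus a limit $\dl\to 0$) is unavoidable on this route---no $\C^1$ function can be non-negative everywhere and equal to the identity on all of $[0,+\infty)$, so the $\dl$-problem never coincides with \rf{P}---and the limit passage (uniform Schauder bounds, compactness, identification of the limit, and a substitute for \ref{A2}) is left entirely as a sketch. As written the argument is incomplete.

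The paper sidesteps all of this by first taking the solution $u$ of \rf{P} itself, which already exists and is unique under \ref{A1}--\ref{A6} by \cite{SP}, and then \emph{freezing} $a_{ij}(t,x,u)$, $b_i(t,x,u,\pl_x u)$ and the gradient argument of $c$ at this known $u$. The truncated auxiliary problem, with reaction $c^k(t,x,|v^1|,\dots,|v^m|,\pl_x u)$, is then semilinear with a Lipschitz nonlinearity in $v$ (only boundedness of $\pl_u c$ is used), so it is solved directly by a Green-function contraction with no appeal to \ref{A1}--\ref{A6} for the modified coefficients. Theorem \ref{l1111} gives $v^k\gt 0$, the absolute values drop out, and $v=u$ follows from uniqueness. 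To repair your proof, either adopt this linearization around the known solution or carry out the $\dl\to 0$ compactness argument in full.
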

\begin{rem}
\rm
The parabolic H\"older space $\C^{1+\frac{\al}2,2+\al}([0,T]\x \F)$
is defined in Subsecion \ref{notation-assumptions}.
\end{rem}
If we assume that the coefficient $c=(c^1,\ldots, c^n)$ does not 
depend on $\pl_x u$,
as it is in the case of the Lotka-\!Volterra system \rf{lv1111}, we can use 
the weaker assumtion \ref{A2'} (below) instead of \ref{A2}. 
This is important because 
\ref{A2} does not hold for system \rf{lv1111}.
\be
[label=($A'_\arabic*$)]
\setcounter{enumi}{1}
\item \lb{A2'} 
There exist constants $d_1,d_2>0$ such that 
\aa{
 &(c(t,x,u,p),u) \lt d_1 + d_2|u|^2 \quad \text{for\;} u\in \{u_i\gt 0, i=1,\ldots,m\},\\
 &(t,x,p)\in [0,T]\x \F\x \Rnu^{m\x n}.
 }
\ee
Remark that in \ref{A2}, the above inequality holds for all $u\in\Rnu^m$, 
while in \ref{A2'}, it holds only for $u$ with non-negative components. 

Our results on the non-negativity of the solutions components
 are directly applicable to the $m$-species Lotka-\!Volterra competition 
 model with diffusion.
  Although we mostly concentrate on the $2$-species Lotka-\!Volterra model,
 some of our results are valid for  the $m$-species model with $m\gt 2$.
 Namely, for  the $m$-species model,
 we obtain the existence and uniqueness of solution
 with non-negative  components along with sufficient conditions 
 for extinction of one of the species as $t\to+\infty$. 
 For the $2$-species model, we additionally
 prove the convergence of solution to the solution of the associated elliptic system.
 Namely, we obtain the following result.
\begin{thm}
\lb{th-lv2222}
Let $(u,v)$ be 
a componentwise non-negative bounded $\C^{1+\frac\al2,2+\al}$-solution to \rf{lv1111}
on $[0,+\infty)\x \F$.
Assume that the coefficients $\beta, \gm, \dl, \rho, \sg, \te$ of \rf{lv1111} 
possess second-order derivatives in $x$ of class $\C^{\frac\al2,\al}$\!.
%the respective H\"older constants on $\ovl \F_T$ are bounded for all $T>0$. 
Further assume that the above coeffcients are differentiable in $t\in [0,+\infty)$  and 
 $\pl_t \beta\gt 0$, $\pl_t \gm  \lt 0$, $\pl_t \dl\lt 0$, $\pl_t \rho\lt 0$,
$\pl_t \sg\gt 0$, $\pl_t \te\gt 0$ on $[0,+\infty)\x \F$;
$\pl_t u(0,x)\gt 0$, $\pl_t v(0,x)\lt 0$ on $\F$.
Finally, assume that the pointwise limits 
$\bar \beta$, $\bar \gm$, $\bar \dl$, $\bar \rho$, $\bar \sg$, $\bar \te$
of $\beta, \gm, \dl, \rho, \sg, \te$  as $t\to+\infty$ (respectively)
are of class $L^1_{loc}$ in all arguments.
Then, there exists a pair of functions $(\bar u(x),\bar v(x))$ such that
for all $x\in\F$,
\aaa{
\lb{limits}
%\hspace{-20mm}\text{such that} \hspace{22mm}
\lim_{t\to+\infty} u(t,x) = \bar u(x) \quad \text{and} \quad
\lim_{t\to+\infty} v(t,x) = \bar v(x).
}
Moreover,  $(\bar u(x),\bar v(x))$ is a weak solution  
to the elliptic system 
\eq{
\lb{el-1111}
d_1 \lap_x \bar u + \bar u(\bar \beta- \bar \gm\bar u - \bar \dl\bar v) = 0,\\
d_2 \lap_x \bar v + \bar v(\bar \rho - \bar \sg\bar u - \bar \te\bar v) = 0.
}
%such that
%\aaa{
%\lb{limits}
%\hspace{-20mm}\text{such that} \hspace{22mm}
%\lim_{t\to+\infty} u(t,x) = \bar u(x) \quad \text{and} \quad
%\lim_{t\to+\infty} v(t,x) = \bar v(x)
%}
%for all $x\in\F$.
\end{thm}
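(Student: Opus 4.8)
The plan is to exploit the temporal monotonicity hidden in the sign hypotheses: I will show that for every fixed $x$ the map $t\mapsto u(t,x)$ is non-decreasing and $t\mapsto v(t,x)$ is non-increasing. Together with the assumed boundedness (and non-negativity) of $(u,v)$ this makes the pointwise limits in \rf{limits} exist by monotone convergence, after which one passes to the limit in the weak formulation of \rf{lv1111} to recover \rf{el-1111}.

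First I would differentiate \rf{lv1111} in $t$ and set $w=\pl_t u$, $z=\pl_t v$; a direct computation gives the weakly coupled linear parabolic system
\aa{
\pl_t w &= d_1\lap_x w + (\beta - 2\gm u - \dl v)\,w - (\dl u)\,z + f,\\
\pl_t z &= d_2\lap_x z - (\sg v)\,w + (\rho - \sg u - 2\te v)\,z + g,
}
with $f = u\big(\pl_t\beta - (\pl_t\gm)u - (\pl_t\dl)v\big)$ and $g = v\big(\pl_t\rho - (\pl_t\sg)u - (\pl_t\te)v\big)$. Since $u,v\gt 0$ and $\gm,\dl,\sg\gt 0$, the hypotheses $\pl_t\beta\gt 0,\ \pl_t\gm\lt 0,\ \pl_t\dl\lt 0$ force $f\gt 0$, while $\pl_t\rho\lt 0,\ \pl_t\sg\gt 0,\ \pl_t\te\gt 0$ force $g\lt 0$. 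Substituting $\tilde z=-z$ turns the system into a \emph{cooperative} one for $(w,\tilde z)$: the off-diagonal coupling coefficients become $\dl u\gt 0$ and $\sg v\gt 0$, and the source terms become $f\gt 0$ and $-g\gt 0$. The initial data are $w(0,\fdot)=\pl_t u(0,\fdot)\gt 0$ and $\tilde z(0,\fdot)=-\pl_t v(0,\fdot)\gt 0$, and under the Dirichlet condition $u=v=0$ on $\pl\F$ one has $w=\tilde z=0$ there. The maximum principle for weakly coupled cooperative parabolic systems then yields $w\gt 0$ and $\tilde z\gt 0$ on $[0,T]\x\F$ for every $T$, i.e. $\pl_t u\gt 0$ and $\pl_t v\lt 0$ throughout.

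To identify the limits I would use the time-shifts $u_n(t,x)=u(t+n,x)$, $v_n(t,x)=v(t+n,x)$, which solve \rf{lv1111} with the shifted coefficients $\beta_n(t,x):=\beta(t+n,x)$, and likewise $\gm_n,\dl_n,\rho_n,\sg_n,\te_n$. Testing against $\psi(t)\phi(x)$ with $\psi\in\C^\infty_c(0,1)$, $\phi\in\C^\infty_c(\F)$ and integrating by parts in $x$,
\aa{
-\intl_0^1\!\!\intl_\F u_n\,\psi'\phi\,dx\,dt
&= d_1\intl_0^1\!\!\intl_\F u_n\,\psi\,\lap_x\phi\,dx\,dt\\
&\quad+\intl_0^1\!\!\intl_\F u_n(\beta_n-\gm_n u_n-\dl_n v_n)\psi\phi\,dx\,dt.
}
As $n\to\infty$, $u_n\to\bar u$ and $v_n\to\bar v$ boundedly and pointwise, while $\beta_n\to\bar\beta$, $\gm_n\to\bar\gm$, $\dl_n\to\bar\dl$ in $L^1_{loc}$; writing each product as $u_n(\beta_n-\bar\beta)+(u_n-\bar u)\bar\beta$ and applying dominated convergence shows every product converges in $L^1_{loc}$. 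The key simplification is that the left-hand side tends to $-\big(\intl_0^1\psi'\,dt\big)\intl_\F\bar u\,\phi\,dx=0$, because $\intl_0^1\psi'\,dt=0$. Choosing $\psi\gt 0$ with $\intl_0^1\psi\,dt>0$ and dividing, one obtains $d_1\intl_\F\bar u\,\lap_x\phi\,dx+\intl_\F\bar u(\bar\beta-\bar\gm\bar u-\bar\dl\bar v)\phi\,dx=0$, the weak form of the first equation in \rf{el-1111}; the second follows identically from the $v$-equation.

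The main obstacle is the regularity needed to legitimately differentiate \rf{lv1111} in $t$ and to apply the classical maximum principle to $(w,z)$: one must know that $w,z$ are of class $\C^{1,2}$, which is more than the assumed $\C^{1+\frac\al2,2+\al}$-regularity of $(u,v)$ delivers directly. I would close this gap by parabolic Schauder/bootstrap estimates, using the assumed $\C^{\frac\al2,\al}$-regularity of the second $x$-derivatives of the coefficients together with their $t$-differentiability to upgrade the solution. Alternatively, the monotonicity $\pl_t u\gt 0$, $\pl_t v\lt 0$ can be obtained without differentiating by comparing the shift $(u(\fdot+h,\fdot),v(\fdot+h,\fdot))$ with $(u,v)$ through the comparison principle for competitive two-species systems with respect to the order $(u_1,v_1)\preceq(u_2,v_2)\iff u_1\lt u_2,\ v_1\gt v_2$, where the coefficient monotonicity reinforces the ordering; this works already at the $\C^{1,2}$ level. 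A secondary point is the case $\F=\Rnu^n$, where the Dirichlet data for $w,z$ is replaced by the boundedness of the solution and a Phragm\'en--Lindel\"of form of the maximum principle.
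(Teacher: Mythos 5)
Your proposal follows essentially the same route as the paper: differentiate \rf{lv1111} in $t$, observe that $(\pl_t u,-\pl_t v)$ solves a cooperative linear parabolic system with non-negative source terms and non-negative initial data, conclude $\pl_t u\gt 0$, $\pl_t v\lt 0$, get the pointwise limits from monotone bounded convergence, and pass to the limit in the weak formulation via time shifts and product test functions $\tet(t)\eta(x)$, using that the time-derivative term vanishes because $\int \tet'=0$ while $\int\tet\ne 0$. The limit passage, including the splitting of products and dominated convergence justified by the monotone bounds $0\lt\beta_\tau\lt\bar\beta$, $\bar\gm\lt\gm_\tau\lt\gm(0,\fdot)$, etc., is the paper's argument almost verbatim; the regularity needed to differentiate in $t$ is handled in the paper by citing Friedman's theorem on differentiability of solutions with $\C^{\frac\al2,\al}$ right-hand sides, which is what your Schauder bootstrap would reproduce.

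The one substantive discrepancy is in how the sign of $(\pl_t u,-\pl_t v)$ is obtained. You invoke the classical maximum principle for weakly coupled cooperative systems and therefore need the boundary data $w=\tilde z=0$ on $\pl\F$, which you extract from the Dirichlet condition $u=v=0$ on $\pl\F$. But Theorem \ref{th-lv2222} does not assume any boundary condition on $(u,v)$ — it only assumes a bounded componentwise non-negative solution on $[0,+\infty)\x\F$, and the paper's remark following the proof stresses precisely that no boundary function is used here. The paper instead applies its Theorem \ref{l1111}, which is an interior positivity result (proved by testing against $\eta^2 u^k_-$ on balls compactly contained in the domain) and requires no boundary information. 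As written, your argument therefore proves the statement only under the additional hypothesis of homogeneous Dirichlet data; to recover the theorem as stated you would need to replace the boundary maximum principle by an interior positivity result of the type of Theorem \ref{l1111}, or by your alternative time-shift comparison argument if it can likewise be localized away from $\pl\F$.
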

\begin{rem}
\lb{weak1111}
\rm
In Theorem \ref{th-lv2222}, by a {\it weak solution} to system \rf{el-1111}, we understand
a pair $(\bar u,\bar v)$ such that for any pair of
test functions $\eta(x)$ and $\zeta(x)$, supported on $\F$,
%such that ${\rm  supp\;} \eta \sub \F$ and ${\rm  supp\;} \zeta \sub \F$,
it holds that
\eq{
\lb{weak1111}
 \int_\F \big(d_1 \lap_x \eta\, \bar u  
+ \eta\, \bar u (\bar \beta- \bar \gm\bar u - \bar \dl\bar v) \big) dx = 0, \\
\int_\F \big(
d_2 \lap_x \zeta \,\bar v + \zeta \,
\bar v(\bar \rho - \bar \sg\bar u - \bar \te\bar v)\big) dx = 0. 
% \bar u = \bar v = 0 \quad \text{on} \; \pl \F.
}
\end{rem}

%However, our main interest is the $2$-species 
%Lotka-\!Volterra competition model. 

\subsection{Our methods and comparison with other works}
As the first step, we prove Theorem \ref{l1111}, which is
 a sufficient condition for a certain component of
an existing classical solution to \rf{P} to remain non-negative. To this end, we use
integration by parts in combination with some estimates. The
 existence and uniqueness of solution with non-negative
components is a consequence of results of \cite{SP} (see also \cite{lady})
and the aforementioned sufficient condition for the non-negativity of one fixed component. 
These results are applied to show the existence and uniqueness of
solution for the general $m$-species Lotka-\!Volterra model and for
studying the asymptotic behavior of the $2$-species model.
More specifically, under the conditions introduced in Theorem \ref{th-lv2222},
 the convergence \rf{limits} follows from the non-negativity or non-positivity
of the time derivatives of the solution components. The non-negativity (non-positivity)
of time derivatives follows, in turn, from Theorems \ref{l1111} and \ref{th2222}.
In contrast with 
other works \cite{cosner1984, Korman1987, Pao2015, Pao2015-1, paoruan2017},
our method  works for the coefficients $\beta,\gm,\dl, \rho, \sg,\te$ which 
can vary in space and time. Most of models proposed in the literature
have significantly stronger restrictions on these coefficients. More specifically,
in most of papers, e.g., \cite{Korman1987, Pao2015, Pao2015-1, paoruan2017},  the coefficients 
are constant; other authors consider some of the coefficients varying only in space
and some being constant. 

On the other hand, the questions of existence, uniquenes, and asymptotic behavior of 
solutions with non-negative components have been treated in the literature (see,
e.g., \cite{cosner1984, Korman1987, Pao2015, Pao2015-1, paoruan2017})
by totally different methods. Most of the aforementioned works employ
the sub- and supersolution method along with comparison theorems. 
We mention here
papers \cite{he2016-2, he2016-1, he2017} 
that contain a detailed study of the asymptotic behavior of solutions
 for constant or varying in space coefficients.
 
 To summarize, we mention once again that in this work, we not only obtain
 the existence and uniqueness of solutions with non-negative components
 for a large class of parabolic systems of the form \rf{P}, 
 but also, we show how  these results can be applied to study the Lotka-\!Volterra system
 with diffusion by the methods that differ from those used in the literature
 and that work for the coefficients of the model that vary in space and time.

%Coexistence of positive solutions to parabolic systems has been 

\subsection{Structure of this work}
In Subsection \ref{s21}, we formulate Assumptions \ref{A1}--\ref{A6} which
are used (sometimes complemented with Assumptions \ref{A7} and \ref{A2'})
to obtain the non-negativity of solution components.
In Subsection \ref{s22}, we consider the initial-boundary value problem \rf{P} and
prove Theorems \ref{l1111} and \ref{th1111}. In the same section,
we obtain the result of Theorem \ref{th1111} in the situation when 
the coefficient $c$ does not depend on the gradient of the solution.
This allows to use the weaker Assumption \ref{A2'} instead of \ref{A2}.
In Subsection \ref{s23},  we obtain a result on the non-negativity of
solution components for problem \rf{P} regarded as a Cauchy problem.
Finally, in Section \ref{s-lv-1111}, we apply the results of Subsection
\ref{s22} to the Lotka-\!Volterra competition model with diffusion. In particular,
in this section, we prove Theorem \ref{th-lv2222}.

\section{Results on the non-negativity of the solution components for Problem \rf{P}}

\subsection{Notation and assumptions}
\lb{s21}
\lb{notation-assumptions}
Everywhere below, $\F\sub\Rnu^n$ denotes 
an open bounded domain with a piecewise-smooth boundary 
$\pl\F$ and non-zero interior angles;
$\F_T =[0,T] \x \F$. For two positive constants $C_1$ and $C_2$,
define
\aa{
&\mc R(\F, C_1,C_2):= \F_T \x \{|u|\lt C_1\}\x \{|p|\lt C_2\},\\
&\mc R_1(\F, C_1):= \F_T \x \{|u|\lt C_1\}.
%&\mc R_2(\F, C_1):= \F_T \x \{|u|\lt C_1\}. 
}
The H\"older space $\C^{2+\al}(\ovl \F)$,
 $\al\in (0,1)$,
is understood as the Banach space with the norm
\aaa{
\lb{ho1}
\|\phi\|_{\C^{2 +\al}(\ovl \F)} = \|\phi\|_{\C^2(\ovl \F)} + [\phi'']_\al,
\quad \text{where} \quad
[\td \phi]_\al = \sup_{x,y\in \ovl \F,\, 0<|x-y|<1}\frac{|\td\phi(x)- \td\phi(y)|}{|x-y|^\al}.
}
For a function $\ffi(x,t)$, the H\"older constant 
with respect to $x$ is defined as 
\aa{
[\ffi]^x_{\al} =  \sup_{x,y\in \ovl \F,\, 0<|x-y|<1} \frac{|\ffi(x,t) - \ffi(y,t)|}{|x-y|^\al},
}
i.e., it is understood as a function of $t$; $[\ffi]^t_{\al}$ is defined likewise.
The H\"older space $\C^{\frac{\al}2,\al}(\ovl \F_T)$ is 
the space of functions $u(t,x)$ possessing the finite norm
\aa{
\|u\|_{\C^{\frac\al2,\al}(\ovl \F_T)} =  \|u\|_{\C(\ovl \F_T)}+
\sup_{t\in [0,T]}[u]_{\al}^x + \sup_{x\in \ovl \F }[u]_{\frac\al2}^t.
}
Furthermore, the parabolic H\"older space $\C^{1+\frac{\al}2,2+\al}(\ovl \F_T)$, $\al\in (0,1)$,
is defined as the Banach space of functions $u(t,x)$ with the norm
\mmm{
\lb{norm1111}
\|u\|_{\C^{1+\frac{\al}2,2+\al}(\ovl \F_T)} =
\|u\|_{\C^{1,2}(\ovl \F_T)} + \sup_{t\in [0,T]}[\pl_t u]_{\al}^x + \sup_{t\in [0,T]}[\pl^2_{xx} u]_{\al}^x \\
+ \sup_{x\in \ovl \F}[\pl_t u]_{\frac{\al}2}^t + \sup_{x\in \ovl \F}[\pl_x u]_{\frac{1+\al}2}^t
+ \sup_{x\in \ovl \F}[\pl^2_{xx} u]_{\frac{\al}2}^t.
} 
Finally, the parabolic H\"older space $\C_b^{1+\frac{\al}2,2+\al}([0,T]\x \Rnu^n)$
is defined similar to $\C^{1+\frac{\al}2,2+\al}(\ovl \F_T)$, but $\F$ is replaced
by $\Rnu^n$ and the first term in \rf{norm1111} is replaced by
$\|u\|_{\C^{1,2}_b([0,T] \x \Rnu^n)}$, where 
$\C_b^{1,2}([0,T]\x \Rnu^n)$ denotes the space of bounded continuous functions
whose mixed derivatives up to the second order in $x\in\Rnu^n$ and first order 
in $t\in [0,T]$ are bounded and continuous on $[0,T]\x \Rnu^n$.

In the assumptions below,  we let the functions $\mu(s)$ and $\hat \mu(s)$,
be non-decreasing and, respectively, non-increasing,
continuous,  defined for positive arguments, and
taking positive values.
Furthermore, the functions $\tet_1(s)$ and $\tet_2(s)$ 
are assumed continuous,  defined for positive arguments,
taking positive values, and non-decreasing with respect to each argument, 
whenever the other arguments are fixed.
Throughout this section, we assume
 \be
[label=($A_\arabic*$)]
\item  \lb{A1} The PDE in \rf{P} is uniformly parabolic on 
$\F_T\x \Rnu^m$, i.e., 
\aa{
\hat \mu(u) I \lt A(t,x,u) \lt \mu(u) I,
}
where $I\in\Rnu^{n\x n}$ is the identity matrix and $A=\{a_{ij}\}_{i,j=1}^n$.
\item \lb{A2} 
There exist constants $d_1,d_2>0$ such that 
\aa{
 &(c(t,x,u,p),u) \lt d_1 + d_2|u|^2\;\; \text{for all } (t,x,u,p)\in 
 \F_T\x\Rnu^m\x \Rnu^{m\x n},\\
 &\text{where}\;\;  c(t,x,u,p)  = (c^1,\ldots, c^n)(t,x,u,p).
 }
\item \lb{A3}  $\ffi\in \C^{2+\al}(\F)$, $\al\in (0,1)$.
\item \lb{A4}  For all  $(t,x,u,p)\in \mc R_1(\F,C_1)\x\Rnu^{m\x n}$, for each $i=1,\ldots n$,
\aa{
&{\rm (a)\quad} |b_i(t,x,u,p)|  \lt \tet_1(|u|)(1+|p|); \\
&{\rm (b)\quad} |c(t,x,u,p)|  \lt \tet_2(|u|, |p|)(1+|p|)^2, \qquad
\lim_{|p|\to \infty} \tet_2(|u|,|p|) = 0.
}
 \item \lb{A5}  
 On regions $\mc R_1(\F_T,C_1)$, 
 the coefficients $a_{ij}$ of \rf{P} 
 possess  continuous second-order derivatives.
 On regions $\mc R(\F_T,C_1,C_2)$,
the coefficients $b_i$ and $c$ possess
 first-order continuous partial derivatives in all arguments (except $x$),
 which are $\al$-H\"older continuous in $x$, $\al\in (0,1)$, 
 and have bounded H\"older constants.
 \item \lb{A6} The following compatibility condition 
  holds for $x\in \pl \F$: $\ffi(x) = 0$ and
\aa{ 
 \sum_{i,j=1}^n  a_{ij} (0,x,0)\pl^2_{x_i x_j} \ffi(x) +
\sum_{i=1}^n   b_i (0,x,0,\pl_x \ffi(x)) \pl_{x_i} \ffi(x)
 + c(0,x, 0, \pl_x \ffi(x)) =0. 
  }
  \ee
\subsection{Initial boundary-value problem}
Here, we prove Theorems \ref{l1111} and \ref{th1111}.
\lb{s22}
\begin{proof}[Proof of Theorem \ref{l1111}]
Define $u^k_{+} = \max\{u^k,0\}$ and $u^k_{-}= \min\{u^k,0\}$, so that
 $u^k = u^k_{+} + u^k_{-}$. It is sufficient to show that $u^k_{-}(t,x) = 0$.

We agree that throughout the proof $\gm_i$, $i=1,2,\ldots$, are positive constants.
Take an arbitrary open ball $B$ whose closure is in $\G$. Let $\eta: \ovl B\to [0,1]$ 
be a smooth function, taking positive values in $B$ and zero on $\pl B$.
For a fixed $t\in [0,T]$, we multiply the equation in \rf{P} by $\eta^2(x) u^k_{-}(t,x)$ and 
integrate over $B$. By the integration-by-parts  formula (see e.g \cite{lady}, p.60), we obtain
\mmm{
\lb{pde-int}
\int_B\eta^2 \pl_t u^k u^k_- dx  
 +\sum_{i,j=1}^n \int_B \eta^2 a_{ij} (t,x,u) \pl_{x_j} u^k_- \pl_{x_i}u^k_- dx \\ =
- \sum_{i,j=1}^n \int_B 
(\eta^2 \pl_{x_i}a_{ij}(t,x,u(t,x))+2\eta \,a_{ij}(t,x,u)
\pl_{x_i}\eta) \pl_{x_j} u^k_- u^k_-  dx \\
+ \sum_{i=1}^n \int_B\eta^2 b_i (t,x,u, \pl_x u)\pl_{x_i} u^k_- u^k_- dx  
+ \int_B \eta^2 c^k(t,x,u,  \pl_x u) u^k_- dx.
}
It holds that $\pl_{x_j} u^k_- = \pl_{x_j} u^k \ind_{\{u^k<0\}}$ and  $\pl_{x_j} u^k_+ = \pl_{x_j} u^k \ind_{\{u^k>0\}}$ in the sense of weak derivatives 
(see \cite[Lemma 7.6]{gilbarg}). 
Furthermore, we show that  $\pl_t u^k_- = \pl_t u^k$ and $\pl_t u^k_+=0$ on the set 
$G_- = \{(t,x) \in\F_T: u^k(t,x)<0\}$. 
Indeed,  since $u^k(t,x) = u^k_-(t,x)$ on $G_-$,
by continuity, $u^k(t+\Dl t, x)=u^k_-(t+\Dl t,x)$ for sufficiently small $\Dl t$, depending on $(t,x)$. 
Hence, $\pl_t u^k_- = \pl_t u^k$ on $G_{-}$. 
For the same reason, $u^k_+(t+\Dl t,x) = u^k_+(t,x) = 0$ on
$G_-$  for sufficiently small $\Dl t$. 
Therefore, $\int_B \!\eta^2 \pl_t u^k u^k_- dx = \int_B\!\eta^2 \pl_t u^k_- u^k_- dx  
=\frac12 \pl_t \!\int_B\!\eta^2 (u^k_-)^2 dx$. By the strong ellipticity
of the second-order differential operator
involved in \rf{P}, the left-hand side of \rf{pde-int} can be evaluated from below by
\aa{
 \frac12\, \pl_t \int_B \eta^2 (u^k_-)^2 dx + \gm_1 \int_B \eta^2 |\pl_x u^k_-|^2  dx.
}
Further, by boundedness of $b_i$, $\pl_x a_{ij}$, $\pl_u a_{ij}$, and $\pl_x u$, the two 
terms on the right-hand side of \rf{pde-int}, containing $\pl_{x_i} u^k_- u^k_-$,
can be evaluated  from above by
\aa{
\gm_3\int_B \eta \, |\pl_x u^k_-| |u^k_-| dx \lt \gm_1\int_B |\pl_x u^k_-|^2 dx
+ \gm_4 \int_B \eta^2 (u^k_-)^2 dx.
}
Finally, note that
\aa{
c^k(t,x,u,\pl_x u) = c^k(t,x,u,\pl_x u)|_{u^k=0} + u^k \int_0^1 \pl_{u_k}
c^k(t,x,u_1,\dots, \la u_k, \dots u_n,\pl_x u) d\la.
} 
%where $\zeta^k$ is a bounded function.  
Since 
$c^k(t,x,u,\pl_x u)|_{u^k=0}\gt 0$  by assumption,
the last term on the right-hand side of \rf{pde-int} can be evaluated from above by
\aa{
\int_B \eta^2 c^k(t,x,u,\pl_x u)|_{u^k=0}\, u^k_- dx + \gm_5 \int_B  \eta^2 (u^k_-)^2 dx
 \lt \gm_5 \int_B \eta^2 (u^k_-)^2 dx.
}
By the last three estimates and since $\eta^2(x) \lt 1$, 
there exists a constant $\gm_0>0$ such that 
\aa{
\pl_t \int_B \eta^2 ( u^k_-)^2 dx  \lt \gm_0  \int_B  \eta^2 (u^k_-)^2 dx.
}
Since $(\ffi^k)_- = 0$,  by Gronwall's inequality, $u^k_- = 0$ on $B\x [0,T]$, 
and hence on $\F_T$. 
\end{proof}
In Theorems \ref{th1111} and \ref{th2222} below, 
we obtain the existence and uniqueness of solutions with non-negative components
under \ref{A1}--\ref{A6} (assumptions that guarantee the existence and uniqueness result, see \cite{lady, SP}) 
along with additional assumptions on the coefficient $c(\fdot)$.
Theorem \ref{th2222} provides the aforementioned result
under the weaker assumption \ref{A2'}, instead of \ref{A2};
however, it is assumed that the coefficient $c$ does not depend
on $\pl_x u$. 
Theorem \ref{th2222} will be used for applications to the Lotka-\!Volterra
competiton model with diffusion in Section \ref{s-lv-1111}.
\begin{proof}[Proof of Theorem \ref{th1111}]
Note that the existence of a unique $\C^{1,2}(\F_T)$-solution $u(t,x)$ to problem
\rf{P} is known under \ref{A1}--\ref{A6} (see, e.g., \cite{SP}).
We consider the problem 
\aaa{
\lb{sv2222}
 &\pl_t v^k =  \sum_{i,j=1}^n a_{ij} (t,x,u)\pl^2_{x_i x_j}v + 
 \sum_{i=1}^n b_i (t,x,u, \pl_x u)\pl_{x_i} v^k 
+ c^k(t,x,|v^1|, \ldots, |v^m|,  \pl_x u),\\ 
 &v(0,x) = \ffi(x)  \notag
}
with respect to $v=(v^1,\ldots, v^m)$,  together  with the equations
%It is known that if $v$ is a $\C^{\al,\frac{\al}2}_b(\F_T)$-solution to the equation
\aa{
  v^k(t,x) = \int_{\F}G(t,x,0,y) \ffi^k(y)  dy 
 +\int_0^t \int_{\F} G(t,x,s,y) c^k(s,y,|v^1|, \ldots, |v^m|,  \pl_x u) dy ds
 }
where $k=1,2,\ldots, m$ and $G(t,x,s,y)$ stands for the (real-valued) Green function 
for the operator 
$L[v^k] = \pl_t v^k - \sum_{i,j=1}^n a_{ij} (t,x,u)\pl^2_{x_i x_j}v^k - 
 \sum_{i=1}^n b_i (t,x,u, \pl_x u)\pl_{x_i} v^k$. The existence of the Green
 function for $L$ and the domain $\F$ is known due to \cite[Section IV, \S 16]{lady}.
 Since the coefficients of $L$ do not depend on $k$, we can rewrite the above
 equation in the vector form
 \aaa{
   \lb{v2222}
  v(t,x) = \int_{\F}G(t,x,0,y) \ffi(y)  dy 
 +\int_0^t \int_{\F} G(t,x,s,y) c(s,y,|v^1|, \ldots, |v^m|,  \pl_x u) dy ds.
 }
Note that 
 equation \rf{v2222} possesses a unique solution in $\C(\ovl \F_T)$.
 To show this, define the map $\Gm: \C(\ovl \F_T)\to \C(\ovl \F_T)$
 %where $M(\ovl \F_T)$ denotes the space of bounded measurable functions on $\ovl\F_T$,
 such that $\Gm(v)$ is given by the right-hand side of \rf{v2222}. 
Theorem 16.3 from \cite[Section IV, \S 16]{lady} on estimates
 for the Green function implies that $\Gm(v)$
 is H\"older continuous in $t$ and $x$ with bounded H\"older constants.
 Hence, the map $\Gm$ is well-defined.  Let $v,\td v\in \C(\ovl \F_T)$.
 %Indeed, let $v$ and $\td v$ be two bounded solutions to \rf{v2222}. 
 Then, by the inequality $||a|-|b||\lt |a-b|$, $a,b\in\Rnu$, and since
 $\pl_u c$ is bounded, we obtain that
 \mm{
|\Gm v(t,x)- \Gm\td v(t,x)| \lt C \int_0^t 
   \sup_{z\in\F}|v(s,z)-\td v(s,z)| \int_\F p_{t-s}(x-y) dy \, ds\\
   \lt C \int_0^t  \sup_{x\in\F}|v(s,x)-\td v(s,x)| \, ds.
 }
 Above, we used the fact that  $G(t,x;s,y)$ possesses bounds by 
 Gaussian densities (see \cite[Section IV, \S 16]{lady}),
Then, $\Gm^N=\underbrace{\Gm\circ \dots \circ \Gm}_N$ is a contraction map 
$\C(\ovl \F_T)\to \C(\ovl\F_T)$ for some $N\in\Nnu$ since
\aa{
\|\Gm^N v - \Gm^N \td v\|_{\C(\ovl \F_T)} \lt \frac{C^N T^N}{N!} 
\|v -\td v\|_{\C(\ovl \F_T)}.
}
 Therefore, again by  \cite[Section IV, \S 16]{lady}, the solution $v$ to \rf{v2222}
is a $\C^{1,2}(\F_T)$-solution to problem \rf{sv2222}.
 Further, by Theorem \ref{l1111}, $v^k\gt 0$ for all $k$, which implies that
$v$ is a  $\C^{1,2}(\F_T)$-solution to 
\eqn{
\lb{v3333}
 &\pl_t v =  \sum_{i,j=1}^n a_{ij} (t,x,u)\pl^2_{x_i x_j}v + \sum_{i=1}^n b_i (t,x,u, \pl_x u)\pl_{x_i} v 
+ c(t,x,v^1, \ldots, v^m,  \pl_x u),\\ 
&v(0,x) = \ffi(x). 
}
However, $u$ is a $\C^{1,2}(\F_T)$-solution to
problem \rf{v3333}.  Moreover, under
\ref{A1}--\ref{A6}, $u$ is unique solution (see \cite{lady, SP}). This implies that $u=v$,
and hence, $u$ has non-negative components.
\end{proof}
Lemma \ref{lem2222} below is a maximum principle for problem \rf{P},
adapted for solutions with non-negative components.
The proof of Lemma \ref{lem2222} follows the lines
of \cite[Lemma 2]{SP}. However, since the result of \cite{SP} cannot be applied directly,
we repeat the proof here for the reader's convenience. 
\begin{lem}
\lb{lem2222}
 Assume \ref{A1}, \ref{A2'}, and \ref{A3}. 
 Let $u(t,x)$ be a  $\C^{1,2}(\F_T)$-solution
to problem \rf{P} whose components are non-negative.
 Then,  
% there exists a constant  $M>0$, which only depends
% on $d_1$, $d_2$ (the constants from \ref{A2'}),  
% $T$, and $\sup_{\ovl\F}|\ffi_0|$,
% such that 
 \aaa{
 \lb{m1111}
 \sup_{\ovl\F_T}|u(t,x)| \lt \max \big\{  e^{(d_2+1) T} \sup_{\ovl\F} |\ffi_0(x)|, \, 
\sqrt{d_1} \big\}.
 }
\end{lem}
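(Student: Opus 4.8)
The plan is to reduce the vector bound to a scalar maximum principle for the squared modulus $v(t,x) := |u(t,x)|^2 = \sum_{k=1}^m (u^k(t,x))^2$, which belongs to $\C^{1,2}(\F_T)$ since $u$ does. First I would differentiate $v$ in $t$ and insert the equations of \rf{P}. Applying the product rule to the second-order terms rewrites $2\sum_{k}u^k\sum_{i,j=1}^n a_{ij}\pl^2_{x_ix_j}u^k$ as $\sum_{i,j=1}^n a_{ij}\pl^2_{x_ix_j}v$ minus the quadratic form $2\sum_{k}\sum_{i,j=1}^n a_{ij}\,\pl_{x_i}u^k\,\pl_{x_j}u^k$; by the ellipticity assumption \ref{A1} the matrix $A=\{a_{ij}\}$ is positive definite, so this quadratic form is non-negative and may be dropped. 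The first-order terms collapse similarly to $\sum_{i=1}^n b_i\pl_{x_i}v$. Because the components of $u$ are non-negative by hypothesis, assumption \ref{A2'} is applicable and controls the reaction contribution through $2\sum_k u^k c^k = 2(c,u)\lt 2d_1+2d_2 v$. This produces the scalar differential inequality
\[
\pl_t v \;\lt\; \sum_{i,j=1}^n a_{ij}\,\pl^2_{x_ix_j}v + \sum_{i=1}^n b_i\,\pl_{x_i}v + 2d_1 + 2d_2\,v
\qquad\text{on }\F_T .
\]

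Next I would apply a maximum principle to $v$ against an exponential weight, following \cite[Lemma 2]{SP}. Setting $z := v\,e^{-2(d_2+1)t}$ and using that $z$ is continuous on the compact set $\ovl\F_T$, I would locate a point $(t_0,x_0)$ where $z$ attains its maximum and split into three cases. If $t_0>0$ and $x_0\in\pl\F$, the Dirichlet condition $u=0$ on $\pl\F$ gives $v=0$, hence $\max z=0$ and the bound is trivial. If $t_0=0$, then $z\lt v(0,\fdot)=|\ffi_0|^2\lt \sup_{\ovl\F}|\ffi_0|^2$, which is the source of the first term in \rf{m1111}. The decisive case is an interior maximum with $t_0>0$ and $x_0\in\F$: there $\pl_{x_i}v=0$ and the spatial Hessian of $v$ is negative semidefinite, so the elliptic terms are $\lt 0$, whereas $\pl_t z\gt 0$ (a one-sided derivative if $t_0=T$) forces $\pl_t v\gt 2(d_2+1)v$. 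Substituting into the differential inequality gives $2(d_2+1)v(t_0,x_0)\lt 2d_1+2d_2 v(t_0,x_0)$, i.e. $v(t_0,x_0)\lt d_1$. Combining the three cases bounds the weighted maximum, and undoing the weight and taking square roots yields the estimate \rf{m1111}.

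The step I expect to be the main obstacle is the interior-maximum case, where the whole argument hinges on the sign bookkeeping: the positive definiteness in \ref{A1} is what lets me discard the gradient term, and \ref{A2'} is what bounds $(c,u)$ --- and the latter is legitimate precisely because the solution has non-negative components, so this is exactly the place where the hypothesis of the lemma is used. The weight $e^{-2(d_2+1)t}$ must be tuned so that the inequality $\pl_t v\gt 2(d_2+1)v$ at the maximum overrides the $2d_2 v$ on the right and isolates the clean algebraic bound $v\lt d_1$. A secondary difficulty is that $\F$ has corners, so rather than invoking interior regularity I would argue entirely through the continuous function $z$ and its maximum on $\ovl\F_T$, which is the reason the proof is organized as in \cite[Lemma 2]{SP} instead of applied directly.
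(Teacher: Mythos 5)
Your proof is correct and follows essentially the same route as the paper: a maximum principle for the squared modulus $|u|^2$ against the exponential weight $e^{-2(d_2+1)t}$, with the same three-case analysis, the same use of \ref{A1} to discard the gradient quadratic form and to get the sign of the Hessian term at an interior maximum, and the same use of \ref{A2'} (legitimized by the non-negativity of the components) to control $(c,u)$. The only difference is cosmetic: the paper weights first ($v=ue^{-\lambda t}$) and then squares, while you square first and then weight, which yields the identical inequality at the maximum point.
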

\begin{proof}
Let $v(t,x)=u(t,x) e^{-\la t}$, where $\la>0$ is to be fixed later. 
Then, $v$ satisfies the equation
\aa{
 \pl_t v + \la v  - \sum_{i,j=1}^{n} a_{ij} (t,x,{u}){v}_{x_i x_j} - e^{-\la t} c(t,x,u,u_x)
 -  \sum_{i=1}^{n} b_i(t,x,u,u_x)v_{x_i} =0.
 }
 Multiplying the above identity scalarly by $v$, 
 and noticing that $(v_{x_ix_j},v) = \frac12 \pl^2_{x_i x_j} |v|^2 - (v_{x_i},v_{x_j})$,
 we obtain
\mmm{
 0=\frac12 \pl_t |v|^2 + \la |v|^2  
 - \frac12 \sum_{i,j=1}^{n}  a_{ij} (t,x,{u})\pl^2_{x_i x_j} |v|^2 -
  e^{- 2 \la t} (c(t,x,u,u_x),u)
 \\ + \sum_{i,j=1}^{n}  a_{ij} (t,x,{u})  (v_{x_i}, v_{x_j}) -
 \frac12  \sum_{i=1}^n b_i(t,x,u,u_x) \pl_{x_i} |v|^2,
  \label{mi1111}
 }
 where $u$ and $v$ are evaluated at $(t,x)$. Note that for the function $w=|v|^2$
 one of the following situations is necessarily realized: 
 1) $w$ achieves its maximum on $(0,T]\x \pl \F$; 
 2) $w$ achieves its maximum on $\{t=0\}\x \F$; 
 3) there exists 
$(t_0,x_0)\in (0,T] \x \F$ such that $w(t_0,x_0) = \sup_{\ovl\F_T}w(t,x)$. 
In case 1), the statement follows trivially. In case 2), one has
\aa{
|u(t,x)| \lt e^{\la T} |v(t,x)| \lt e^{\la T} \sup_{\bar\F} |\ffi(x)|
}
%In case 2), it holds that
%$\sup_{\ovl\F_T}|v(t,x)| = \sup_{\ovl\F} |v(0,x)| = \sup_{\ovl \F} |\ffi_0(x)|$.
Finally, in case 3),   
\aa{
\pl_x w(t_0,x_0)=0 \quad \text{and} \quad  \pl_t w(t_0,x_0) \gt 0.
}
By \cite[Lemma 1]{SP}, the term 
$-\frac12\sum_{i,j=1}^{n}  a_{ij} (t,x,{u})\pl^2_{x_i x_j} |v|^2$
in \rf{mi1111}  is non-negative at $(t_0,x_0)$.
Further remark that at  $(t_0,x_0)$, the last term equals zero,
 the penultimate term is non-negative by \ref{A1}, 
and the first two terms are non-negative.
Finally, note that $e^{- 2 \la t} (c(t,x,u,u_x),u) \lt d_1 e^{-2\la t} + d_2 w$.
Consequently,  \rf{mi1111} implies that at point $(t_0,x_0)$ 
\aa{
 0\gt -d_1e^{-2\la t_0} - d_2w(t_0,x_0) + \la w(t_0,x_0). 
}
Picking $\la  = d_2   + 1$, we obtain that $|u(t_0,x_0)|^2 \lt d_1$, which concludes the proof.
\end{proof}
\begin{thm}
\lb{th2222}
Let the assumptions of Theorem \ref{th1111} hold,
 except the assumption \ref{A2} which is replaced with the weaker
 assumption \ref{A2'}.
Assume that the coefficient $c(\fdot)$ in \rf{P}
does not depend on $\pl_x u$ and that $\pl_u c$ is bounded
on regions $\mc R(\F,C_1)$ for any positive constant $C_1$. 
Then, there exists a 
$\C^{1+\frac{\al}2,2+\al}(\F_T)$-solution to problem \rf{P} with non-negative 
components which is unique in the class $\C^{1,2}(\F_T)$.
%which is unique in the class
%of $\C^{1,2}(\F_T)$-solutions with non-negative components. 
\end{thm}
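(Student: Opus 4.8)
The plan is to reduce Theorem \ref{th2222} to Theorem \ref{th1111} by replacing $c$ with a modification $\hat c$ that is compactly supported in $u$ and therefore satisfies the strong growth condition \ref{A2}, and then to invoke the maximum principle of Lemma \ref{lem2222} to show that the resulting componentwise non-negative solution never leaves the region on which $\hat c$ and $c$ agree. The obstruction to quoting Theorem \ref{th1111} verbatim is precisely its opening step: the existence of a $\C^{1,2}(\F_T)$-solution borrowed from \cite{SP} uses \ref{A2}, which need not hold here. Everything else is arranged so as to restore \ref{A2} on a bounded set while leaving \ref{A2'}, \ref{A7}, and the structural hypotheses intact.

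First I would record the a priori bound. Since $c$ satisfies \ref{A2'} with constants $d_1,d_2$, Lemma \ref{lem2222} shows that every componentwise non-negative $\C^{1,2}(\F_T)$-solution of \rf{P} obeys $\sup_{\ovl\F_T}|u|\lt M$, where $M=\max\{e^{(d_2+1)T}\sup_{\ovl\F}|\ffi_0|,\sqrt{d_1}\}$ depends only on the given data. Fix a smooth cut-off $\chi:[0,\infty)\to[0,1]$ with $\chi\equiv 1$ on $[0,M]$ and $\chi\equiv 0$ on $[M+1,\infty)$, and set $\hat c(t,x,u)=\chi(|u|)\,c(t,x,u)$. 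Because $c$ does not depend on $\pl_x u$, the function $\hat c$ is bounded on $\F_T\x\Rnu^m$, being continuous and compactly supported in the $u$-variable, whence $(\hat c(t,x,u),u)\lt d_1'+d_2'|u|^2$ for all $u\in\Rnu^m$ and suitable $d_1',d_2'>0$; this is exactly \ref{A2}. This is the one place where independence of $c$ from $\pl_x u$ is essential: for a $p$-dependent $c$ satisfying only \ref{A4}(b), the product $\chi(|u|)c$ would still grow quadratically in $p$ and \ref{A2} would fail.

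Next I would check that $\hat c$ inherits the remaining hypotheses. For $u$ with non-negative components one has $(\hat c(t,x,u),u)=\chi(|u|)(c,u)\lt d_1+d_2|u|^2$ with the \emph{original} constants (distinguishing the cases according to the sign of $(c,u)$ and using $d_1+d_2|u|^2\gt 0$), so $\hat c$ still satisfies \ref{A2'} with $d_1,d_2$. Since $\chi\gt 0$ and $c^k|_{u^k=0}\gt 0$ on $\{u^i\gt 0,\ i\ne k\}$, the sign condition \ref{A7}(b) is preserved, while \ref{A7}(a) is untouched. Moreover $\pl_u\hat c=\chi'(|u|)\tfrac{u}{|u|}\,c+\chi(|u|)\pl_u c$ is bounded on each region $\mc R(\F,C_1)$; \ref{A4}(a) is unchanged and \ref{A4}(b) is immediate for the bounded, $p$-independent $\hat c$; and \ref{A1}, \ref{A3}, \ref{A5}, \ref{A6} involve only the unchanged coefficients $a_{ij},b_i,\ffi$. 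Thus problem \rf{P} with $\hat c$ in place of $c$ satisfies all of \ref{A1}--\ref{A7}.

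Applying Theorem \ref{th1111} to this modified problem produces a $\C^{1+\frac\al2,2+\al}(\F_T)$-solution $\hat u$ with non-negative components. Since $\hat c$ satisfies \ref{A2'} with the constants $d_1,d_2$, Lemma \ref{lem2222} gives $\sup_{\ovl\F_T}|\hat u|\lt M$, so $\hat u$ takes values in $\{|u|\lt M\}$, where $\chi\equiv 1$ and hence $\hat c=c$; therefore $\hat u$ solves the original problem \rf{P}, which proves existence. For uniqueness in $\C^{1,2}(\F_T)$, given two solutions $u_1,u_2$ I would note that each is bounded on the compact set $\ovl\F_T$, say by $R$, repeat the cut-off construction with $\chi\equiv 1$ on $[0,R]$, and observe that both $u_1,u_2$ solve the modified problem, which satisfies \ref{A1}--\ref{A6}; the uniqueness statement of \cite{SP} then forces $u_1=u_2$. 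The main obstacle throughout is organizational rather than computational: one must verify that a \emph{single} cut-off simultaneously restores \ref{A2} and keeps \ref{A2'} valid with the \emph{unchanged} constants $d_1,d_2$, since it is the latter that makes the bound $M$—computed from the original data—control the modified solution and pin it inside the coincidence set.
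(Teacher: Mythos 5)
Your proposal is correct and follows essentially the same route as the paper: an a priori bound $M$ from Lemma \ref{lem2222} under \ref{A2'}, a truncation of $c$ that restores \ref{A2} (the paper uses $\td c(t,x,u)=c(t,x,\xi_M(u)u)$ rather than your multiplicative cut-off $\chi(|u|)c$, but this is cosmetic), an application of Theorem \ref{th1111} to the truncated problem, and a second use of Lemma \ref{lem2222} to show the solution stays where the truncation is inactive. The only minor divergence is in the uniqueness step, where the paper simply cites the uniqueness theorem of \cite{SP}, which does not require \ref{A2}, instead of running a second cut-off argument as you do; both are valid.
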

\begin{proof}
First, we note that the conclusion of the theorem holds under the assumption
that $\pl_u c$ is globally bounded since the latter condition implies \ref{A2}. 
Indeed, by boundedness of $\pl_u c$, 
$|c(t,x,u)| \lt  \gm_1 + \gm_2 |u|$ for some positive constant $\gm_1$ and $\gm_2$,
which immediately implies \ref{A2}. Hence, if $\pl_u c$ is globally bounded,
 the existence of a unique 
$\C^{1,2}(\F_T)$-solution $u(t,x)$ to problem \rf{P} follows from Theorem \ref{th1111}. 
Moreover,  the components of $u$ are non-negative. 
 
 Let the constant $M$ be defined by the right-hand side of \rf{m1111}.
 By \ref{A2'} and Lemma \ref{lem2222},
 any solution $u$ to \rf{P} with non-negative components is bounded by $M$.
 In problem \rf{P},
 substitute $c(t,x,u)$ with $\td c(t,x,u): = c(t,x, \xi_{M}(u) u)$,
 where $\xi_M$ is a mollification of the indicator function
  $\ind_{\{|u|\lt 2M\}}$ with the property that
 $\xi_M(u) = 1$ if $|u|\lt M$.   
 We will refer to the corresponding problem as $(\td P)$. 
 By what was proved, problem $(\td P)$ has a unique $\C^{1,2}(\F_T)$-solution
 $\td u$; moreover, the components of $\td u$ are non-negative. 
 By Lemma \ref{lem2222}, $|\td u(t,x)|\lt M$. 
 Hence, $\td u$ is a also a solution to the original problem \rf{P}. 
 Finally, we note that under the \ref{A1}, \ref{A3}--\ref{A6}, the solution
 to \rf{P} is unique. The uniqueness is obtained 
 in \cite[Theorem 9, Section 2.7]{SP}. Remark that the aforementioned 
 result on uniqueness (exposed in \cite{SP}) does not require 
 \ref{A2}.
 \end{proof}

\subsection{Cauchy problem}
\lb{s23}
\begin{thm}
\lb{th3333}
Assume that \ref{A1}--\ref{A7} are fulfilled with $\F = B_r(0)$, that is,
for centered balls $B_r(0)$ of all radii $r>0$.
Further assume that $\pl_u c$ is bounded on regions
$\mc R(B_r(0),C_1,C_2)$ for all $r>0$. 
Then, there exists a 
$\C^{1+\frac{\al}2,2+\al}_b([0,T] \x \Rnu^n)$-solution with 
non-negative components  to problem \rf{P},  which is unique in the class $\C^{1,2}_b([0,T] \x \Rnu^n)$.
\end{thm}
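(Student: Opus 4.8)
The plan is to construct the solution on $\Rnu^n$ by exhausting the space with the balls $B_R(0)$, solving \rf{P} on each ball, and passing to the limit $R\to+\infty$ through bounds that are uniform in $R$; non-negativity of the components and uniqueness are then inherited from, respectively, the ball solutions and a global maximum principle.

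First I would fix $R>0$ and apply Theorem \ref{th1111} on $B_R(0)$ to obtain there a solution $u_R$ of \rf{P} with non-negative components, unique in $\C^{1,2}$. Since the Cauchy datum $\ffi$ need not vanish on $\pl B_R(0)$, I would first replace it by a positivity-preserving truncation $\chi_R\ffi$, where $\chi_R$ is a smooth cutoff equal to $1$ on $B_{R-1}(0)$ and vanishing near $\pl B_R(0)$; this keeps \ref{A7}(a) and supplies the boundary compatibility needed to run Theorem \ref{th1111} on the ball. As only the interior behaviour of $u_R$ enters the limit, the modification near the sphere is harmless.

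Next I would make the bounds on $u_R$ uniform in $R$. Lemma \ref{lem2222} applies (because \ref{A2} implies \ref{A2'}) and gives $\sup_{B_R(0)\x[0,T]}|u_R|\lt M$ with $M=\max\{e^{(d_2+1)T}\sup|\ffi|,\sqrt{d_1}\}$, a bound free of $R$. The structure conditions \ref{A4} then yield interior gradient bounds independent of $R$, and once $|u_R|$ and $|\pl_x u_R|$ are controlled, \ref{A5} bounds the H\"older norms of the coefficients $a_{ij}(t,x,u_R)$, $b_i(t,x,u_R,\pl_x u_R)$ and $c^k$ uniformly in $R$. Interior parabolic Schauder estimates (\cite[Section IV]{lady}) therefore bound $u_R$ in $\C^{1+\frac\al2,2+\al}$ on every compact space-time set, with constants that---by translation invariance of the coefficient bounds over $\Rnu^n$---depend neither on $R$ nor on the base point. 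A diagonal Arzel\`a--Ascoli argument extracts a subsequence $u_{R_j}\to u$ in $\C^{1,2}_{\rm loc}([0,T]\x\Rnu^n)$; the limit $u$ solves the equation classically, satisfies $u(0,\cdot)=\ffi$, inherits $\sup|u|\lt M$ and non-negative components from the $u_{R_j}$, and, by lower semicontinuity of the H\"older seminorms together with the base-point-uniform estimates, lies in $\C_b^{1+\frac\al2,2+\al}([0,T]\x\Rnu^n)$.

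For uniqueness in $\C^{1,2}_b$, I would take two bounded solutions $u,\td u$ and note that $w=u-\td u$ solves a linear, uniformly parabolic equation with bounded coefficients (bounded by \ref{A4}, \ref{A5} and the boundedness of $\pl_u c$, all evaluated along $u$ and $\td u$) and with $w(0,\cdot)=0$; comparing $|w|$ with the barriers $\eps\,e^{\la t}(1+|x|^2)$ and letting $\eps\to0$ forces $w\equiv0$. The crux is the uniform-in-$R$ estimate step: securing the interior gradient and Schauder estimates with constants genuinely independent of $R$ and of the centre. This $R$-uniformity, together with the $R$-free maximum bound $M$, is exactly what turns the limit into a globally bounded solution of the prescribed parabolic H\"older class, and it is where \ref{A4}, \ref{A5} and the assumed boundedness of $\pl_u c$ on every $\mc R(B_r(0),C_1,C_2)$ are genuinely used.
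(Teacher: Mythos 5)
Your overall strategy (exhaust $\Rnu^n$ by balls, solve on each ball via Theorem \ref{th1111}, pass to the limit, and let the approximants carry the non-negativity) is the same as the paper's, but there are two concrete problems with your execution. First, your truncation of the data is insufficient to invoke Theorem \ref{th1111} on $B_R(0)$: replacing $\ffi$ by $\chi_R\ffi$ makes the initial datum and all its derivatives vanish near $\pl B_R(0)$, so the compatibility condition \ref{A6} on the sphere reduces to $c(0,x,0,0)=0$ for $x\in\pl B_R(0)$, which is not guaranteed by the hypotheses. The paper's proof multiplies the reaction term as well, replacing $c$ by $\xi_r(x)\,c(t,x,u,\pl_x u)$, precisely so that \ref{A6} holds on $\pl B_r(0)$ (and \ref{A7}(b) survives since $\xi_r\gt 0$). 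Without cutting off $c$, your ball problems may simply not satisfy the hypotheses of Theorem \ref{th1111}.

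Second, you re-derive the global existence and uniqueness from scratch, and the load-bearing step --- interior gradient bounds and interior Schauder estimates for the quasilinear system that are uniform in $R$ and in the base point --- is asserted rather than proved; for systems this is the heavy Ladyzhenskaya--Solonnikov--Uraltseva machinery, not a one-line consequence of \ref{A4}--\ref{A5}, and the hypotheses as stated do not even make explicit that the constants in \ref{A1}, \ref{A4}, \ref{A5} are uniform over all balls. The paper avoids this entirely: it cites the known fact (from \cite{lady} and \cite[Theorem 10]{SP}) that the unique $\C^{1,2}_b([0,T]\x\Rnu^n)$-solution to \rf{P} is already constructed as the pointwise limit of a diagonal sequence $u_{r_l}$ of solutions to the truncated ball problems, and then the only new content of the proof is that each $u_{r_l}^k\gt 0$ by Theorem \ref{l1111}, hence $u^k\gt 0$. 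Your barrier argument for uniqueness is also redundant for the same reason. If you repair the truncation of $c$ and either supply the uniform interior estimates in detail or replace that whole block by the citation to the diagonalization construction, your argument becomes the paper's proof.
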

\begin{proof}
Let 
$\zeta_r(x)$ be a smooth cutting function for the ball $B_r(0)$, $r>1$; 
namely, $\zeta_r(x) =1$ if $x\in B_{r-1}(0)$,
$\zeta_r(x) = 0$ if $x\notin B_r(0)$, 
$\zeta_r$ has bounded derivatives of all orders that do not depend on $r$.
One can take $\zeta_r$ to be the mollified indicator function 
of the ball $B_{r-1/2}(0)$. Consider the PDE 
\aaa{
\label{PDE1}
 \pl_t u =  \sum_{i,j=1}^n a_{ij} (t,x,u)\pl^2_{x_i x_j}u + \sum_{i=1}^n b_i (t,x,u, \pl_x u)\pl_{x_i} u 
+ \xi_r(x) c(t,x,u,  \pl_x u) 
}
 with the boundary function 
\eq{
\lb{bc2222}
\psi(t,x) =
\ffi(x) \xi_r(x), \quad x\in \{t=0\} \x B_r, \\
0, \quad (t,x) \in  [0,T] \x \pl B_r.
}
By Theorem \ref{th1111}, there exists a unique 
$\C^{1,2}([0,T], \ovl B_r)$-solution $u_r$ to problem (\ref{PDE1}-\ref{bc2222}).
Moreover, $u^k_r\gt 0$ for all $k=1,\ldots, m$.

It is known that the unique $\C^{1,2}_b([0,T]\x\Rnu^n)$-solution $u(t,x)$ to problem \rf{P}  can constructed by the diagonalization argument
(see \cite{lady} or  \cite[Theorem 10]{SP}). Namely, there is a sequence of solutions $u_{r_l}(t,x)$ to initial-boundary
value problems (\ref{PDE1}-\ref{bc2222})  with $r=r_l$, where
$\lim_{l\to+\infty} r_l = +\infty$,  such that for each $x\in\Rnu^n$,
\aa{
u(t,x) = \lim_{l\to\infty} u_{r_l}(t,x).
}
By Theorem \ref{l1111}, $u^k_{r_l}(t,x)\gt 0$ on $[0,T] \x B_{r_l}$. 
Therefore, $u^k(t,x)\gt 0$ on $[0,T]\x\Rnu^n$.
\end{proof}
\begin{thm}
\lb{th4444}
Assume that the conditions of Theorem \ref{th3333} hold,
except \ref{A2}, which is replaced with \ref{A2'} and fulfilled
on regions $\mc R(B_r(0),C_1,C_2)$ for all $r>0$ and
$C_1,C_2$ depending on $r$.
Next, we assume that the coefficient $c$ in \rf{P}
does not depend on the last argument $p$ and that $\pl_u c$ is bounded
on regions $\mc R(B_r(0),C_1)$ for and all $r>0$ and for some positive
constant $C_1$, depending on $r$. 
Then, there exists a 
$\C^{1+\frac{\al}2,2+\al}_b([0,T] \x \Rnu^n)$-solution with non-negative 
components  to problem \rf{P},
which is unique in the class $\C^{1,2}_b([0,T] \x \Rnu^n)$.
\end{thm}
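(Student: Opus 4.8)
The plan is to combine the localization-and-diagonalization scheme from the proof of Theorem \ref{th3333} with the truncation argument of Theorem \ref{th2222}, the latter being precisely what allows us to work under the weaker hypothesis \ref{A2'} rather than \ref{A2}. The one genuinely new point is that the a priori bound furnished by Lemma \ref{lem2222} must be uniform in the ball radius $r$, so that the solution obtained in the limit on $\Rnu^n$ is bounded.

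I would first localize exactly as in Theorem \ref{th3333}. For each $r>1$, let $\xi_r$ be the spatial cutoff of $B_r(0)$ (equal to $1$ on $B_{r-1}(0)$ and vanishing outside $B_r(0)$), and consider problem \rf{P} posed on $B_r(0)$ with $c(t,x,u)$ replaced by $\xi_r(x)\,c(t,x,u)$ and with the initial/boundary data $\psi$ of \rf{bc2222}. Since $\xi_r$ depends only on $x$ and takes values in $[0,1]$, this truncation preserves all the structural hypotheses on each ball: $c$ still does not depend on $p$; $\pl_u(\xi_r c)=\xi_r\,\pl_u c$ is bounded on $\mc R(B_r(0),C_1)$ (with $C_1$ taken at least as large as the bound $M$ below); \ref{A7} is inherited because $(\xi_r c^k)|_{u^k=0}=\xi_r\,c^k|_{u^k=0}\gt 0$ and $\ffi^k\xi_r\gt 0$; and \ref{A2'} survives with the same constants since $(\xi_r c,u)\lt \xi_r(d_1+d_2|u|^2)\lt d_1+d_2|u|^2$. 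Thus the hypotheses of Theorem \ref{th2222} hold on each $B_r(0)$, yielding a $\C^{1+\frac\al2,2+\al}$-solution $u_r$ on $[0,T]\x\ovl B_r$ with non-negative components, unique in $\C^{1,2}$.

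The crucial step is the uniform bound. Applying Lemma \ref{lem2222} on $B_r(0)$ to $u_r$ --- which has non-negative components and satisfies \ref{A2'} with the radius-independent constants $d_1,d_2$ --- gives $|u_r(t,x)|\lt M$, where $M=\max\{e^{(d_2+1)T}\sup_{\Rnu^n}|\ffi|,\sqrt{d_1}\}$. Because $\sup_{\ovl B_r}|\ffi\xi_r|\lt \sup_{\Rnu^n}|\ffi|<\infty$ for the bounded Cauchy datum $\ffi$, the bound $M$ is independent of $r$. This uniform $L^\infty$-bound is exactly what makes the weaker assumption \ref{A2'} sufficient here, in the same way as in Theorem \ref{th2222}.

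Finally I would pass to the limit via the diagonalization argument of Theorem \ref{th3333} (see \cite{lady} and \cite[Theorem 10]{SP}). The uniform bound $M$, together with the interior parabolic estimates of \cite{lady} --- which on any fixed compact subset of $\Rnu^n$ become uniform in $r$ once $r$ is large enough that $\xi_r\equiv1$ there --- gives equicontinuity of $\{u_{r_l}\}$ and of its derivatives up to the relevant order, so a subsequence converges on each compact set to a limit $u\in\C^{1+\frac\al2,2+\al}_b([0,T]\x\Rnu^n)$. On every compact set one has $\xi_{r_l}\equiv1$ eventually, so $u$ solves the original problem \rf{P}; the bound by $M$ and the non-negativity of the components pass to the limit (the latter also following directly from Theorem \ref{l1111}). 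Uniqueness in $\C^{1,2}_b([0,T]\x\Rnu^n)$ is the one from \cite[Theorem 9, Section 2.7]{SP}, which does not invoke \ref{A2}. I expect the main obstacle to be the bookkeeping needed to certify that both the bound $M$ and the interior estimates are genuinely uniform in $r$; the remaining steps are a direct transcription of the proofs of Theorems \ref{th2222} and \ref{th3333}.
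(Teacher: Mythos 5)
Your proposal is correct and follows exactly the route the paper takes: its own proof of Theorem \ref{th4444} is a single sentence saying that one repeats the argument of Theorem \ref{th3333} but obtains the local solutions $u_r$ from Theorem \ref{th2222} instead of Theorem \ref{th1111}. You merely make explicit the details the paper leaves implicit, in particular that the cutoff $\xi_r$ preserves \ref{A2'} and \ref{A7} and that the bound from Lemma \ref{lem2222} is independent of $r$, which is a useful (and correct) elaboration rather than a different approach.
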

\begin{proof}
The proof is almost the same as of Theorem \ref{th3333}, except for the fact
that the solutions $u_r$ are introduced by virtue of Theorem \ref{th2222}. 
\end{proof}

\section{Applications: Lotka-\!Volterra competition model with diffusion}
\lb{s-lv-1111}
\subsection{Existence and asymptotic behavior for the
$m$-species Lotka-\!Volterra model}
Here, we consider the following Lotka-\!Volterra competition model, 
widely studied in the literature, mostly in the dimension $n=2$: 
\eq{
\lb{lv-nd}
\pl_t u^k = d_k \lap_x u^k + u^k(\beta_k - \gm_{kk} u^k - 
\sum_{i\ne k}^m \gm_{ki} u^i),  \\
u^k(0,x) = \phi^k(x), \quad x\in \F,\\
u^k(t,\fdot) = 0 \;\; \text{on} \;\; \pl\F;\quad
 k=1,\ldots, m.
}
Above, $\beta_k$, $\gm_{ki}$, $i,k = 1,\ldots, m$ are non-negative functions
on $[0,+\infty)\x \F$, $d_k>0$ are constants. 
We start by applying the results of the previous section 
to show the so-called coexistence of positive states
for problem \rf{lv-nd}. Namely, we have the following result.
\begin{thm}
\lb{th-lv1111}
Let
$\beta_k$, $\gm_{ki}$, $i,k = 1,\ldots, m$,  be functions $[0,+\infty) \x \F \to [0,+\infty)$
continuously differentiable in $t$, $\al$-H\"older continuous in $x$
$(0<\al<1)$ and
such that the time derivatives and H\"older constants are bounded
on regions $[0,T] \x \ovl \F$ for all $T>0$. Further assume that $\phi^k$,
$k = 1,\ldots, m$, are functions $\F\to [0,+\infty)$ of class $\C^{2+\al}$
such that $\phi_k(0) = \lap\phi_k(x) = 0$ for $x\in\pl \F$.
Then, there exists a 
$\C^{1+\frac{\al}2,2+\al}(\ovl \F_T)$-solution $u = (u^1,\ldots, u^m)$ 
to problem \rf{lv-nd}
such that $u^k\gt 0$ for all $k = 1,\ldots, m$. This solution is unique 
in the class $\C^{1,2}(\ovl \F_T)$.
\end{thm}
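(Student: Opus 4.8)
The plan is to recognize problem \rf{lv-nd} as a special instance of \rf{P} and to invoke Theorem \ref{th2222}. Writing $c^k(t,x,u) = u^k\big(\beta_k - \gm_{kk}u^k - \sum_{i\ne k}\gm_{ki}u^i\big)$, we see that \rf{lv-nd} has the form \rf{P} with $b_i\equiv 0$ and a reaction term $c=(c^1,\ldots,c^m)$ that does \emph{not} depend on $\pl_x u$. This gradient-independence is precisely the hypothesis under which the sharper Theorem \ref{th2222} applies, so that the weaker growth assumption \ref{A2'} may be used in place of \ref{A2}; this is essential, since the cubic term $-\gm_{kk}(u^k)^3$ appearing in $(c,u)$ prevents a two-sided quadratic bound on all of $\Rnu^m$. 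The task thus reduces to verifying \ref{A1}, \ref{A2'}, \ref{A3}--\ref{A7} and the boundedness of $\pl_u c$.

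The structural hypotheses are immediate. Parabolicity \ref{A1} holds with $\hat\mu\equiv\min_k d_k$ and $\mu\equiv\max_k d_k$, since the diffusion matrix attached to the $k$-th equation is $d_k I$. Assumption \ref{A3} is the hypothesis $\phi^k\in\C^{2+\al}$. For \ref{A5}, the diffusion coefficients are constant (hence smooth), $b\equiv 0$, and $c$ is a quadratic polynomial in $u$ whose coefficients $\beta_k,\gm_{ki}$ are $\C^{\frac{\al}2,\al}$ in $(t,x)$ by assumption, so the required derivatives exist and are H\"older in $x$; likewise $\pl_u c$ is linear in $u$ with bounded coefficients, hence bounded on every region $\mc R(\F,C_1)$. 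The growth condition \ref{A4} is trivial, as $b\equiv 0$ and $c$ is independent of $p$. The compatibility condition \ref{A6} reduces, after using $b\equiv 0$ and $c^k(0,x,0)=0$ (each $c^k$ carries the factor $u^k$), to $d_k\lap_x\phi^k(x)=0$ on $\pl\F$, which is exactly the assumption $\lap\phi_k(x)=0$ for $x\in\pl\F$ together with $\phi^k|_{\pl\F}=0$.

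The two conditions that exploit the sign structure of the model are \ref{A2'} and \ref{A7}. For \ref{A2'}, restricting to $u$ with non-negative components and using $\beta_k,\gm_{ki}\gt 0$, one computes $(c(t,x,u),u) = \sum_{k}(u^k)^2\big(\beta_k - \gm_{kk}u^k - \sum_{i\ne k}\gm_{ki}u^i\big)\lt\big(\max_k\sup_{\F_T}\beta_k\big)\,|u|^2$, since the cubic and cross terms are non-positive; this gives \ref{A2'} with $d_2=\max_k\sup_{\F_T}\beta_k$ (finite because each $\beta_k$ is continuous on the compact set $\ovl\F_T$) and any $d_1>0$. For \ref{A7}, part (a) is the hypothesis $\phi^k\gt 0$, while part (b) follows because the factor $u^k$ forces $c^k|_{u^k=0}=0\gt 0$ on the relevant region.

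The one genuine point requiring care, and the main obstacle, is that the diffusion constants $d_k$ may differ across components, so the principal operator of \rf{lv-nd} depends on $k$, whereas problem \rf{P} shares a single matrix $A=\{a_{ij}\}$ among all equations. This mismatch is only apparent: the non-negativity argument of Theorem \ref{l1111} is carried out one component at a time and uses solely the ellipticity of the matrix attached to the $k$-th equation (here $d_k I\gt(\min_k d_k)I$), and the Green-function contraction scheme underlying the proofs of Theorems \ref{th1111} and \ref{th2222} goes through verbatim once the single kernel $G$ is replaced by the $m$ decoupled heat kernels $G_k$ of $\pl_t-d_k\lap_x$, each obeying the same Gaussian bounds used there. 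With this understood, Theorem \ref{th2222} applies and produces a $\C^{1+\frac{\al}2,2+\al}(\ovl\F_T)$-solution $u=(u^1,\ldots,u^m)$ with $u^k\gt 0$ for every $k$, unique in $\C^{1,2}(\ovl\F_T)$, which is the assertion.
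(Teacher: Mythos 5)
Your proof is correct and follows essentially the same route as the paper: the paper's own proof is a one-line verification that \ref{A1}, \ref{A2'}, \ref{A3}--\ref{A7} hold for \rf{lv-nd} followed by an appeal to Theorem \ref{th2222}, which is exactly what you carry out, only in full detail. Your additional discussion of the fact that the diffusion constants $d_k$ vary with $k$ --- so that \rf{lv-nd} is not literally an instance of \rf{P}, which shares a single matrix $A$ among all components --- flags a point the paper silently passes over, and your resolution (the componentwise non-negativity argument and the Green-function scheme only use the ellipticity of each $d_k I$ separately) is the right one.
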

\begin{proof}
Note that Assumptions \ref{A1}, \ref{A2'}, \ref{A3}--\ref{A7} are fulfilled for problem \rf{lv-nd}.
%\rf{lv1111}--\rf{lv-b1111}.
Therefore, the unique $\C^{1+\frac{\al}2,2+\al}([0,+\infty)\x \ovl \F)$-solution 
to problem  \rf{lv-nd} exists by 
Therorem \ref{th2222} and has non-negative components $u^k$.
\end{proof}
In what follows, we determine sufficient conditions when  one of the species vanishes 
as $t\to +\infty$.
To this end, we obtain two sufficient conditions for boundedness
of a certain component of the solution to \rf{lv-nd}.
\begin{lem}
Let $\beta_k$, $\gm_{ki}$, $i,k = 1,\ldots, m$,  
be functions $[0,+\infty) \x \F \to [0,+\infty)$ and let
$u = (u^1, \ldots, u^m)$ be a $\C^{1,2}([0,+\infty)\x\ovl \F)$-solution
to problem  \rf{lv-nd} such that $u^i\gt 0$ for all
$i=1,\ldots, m$. 
Assume that the function $\beta_k / \gm_{kk}$ is bounded on
$[T,+\infty)\x \F$ for some $T>0$. 
Then, the component $u^k$ is bounded on $[0,+\infty)\x \F$.
\end{lem}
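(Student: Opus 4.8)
The plan is to control $u^k$ by comparison with the scalar logistic equation obtained from \rf{lv-nd} by discarding the (non-positive) competition terms. First I would exploit the sign structure of the problem: since $u^i\gt0$ and $\gm_{ki}\gt0$ for all $i,k$, one has $\sum_{i\ne k}\gm_{ki}u^i\gt0$, and since $u^k\gt0$ as well, the $k$-th equation in \rf{lv-nd} yields the pointwise differential inequality
\[
\pl_t u^k \lt d_k\lap_x u^k + u^k(\beta_k-\gm_{kk}u^k)
\]
on $[0,+\infty)\x\F$. Thus $u^k$ is a subsolution of the scalar problem with logistic reaction $f(s):=s(\beta_k-\gm_{kk}s)$.

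Next I would fix the comparison constant. On the compact slab $[0,T]\x\ovl\F$ the function $u^k$ is continuous, hence bounded by some $M_0$. The hypothesis provides a constant $K$ with $\beta_k\lt K\gm_{kk}$ on $[T,+\infty)\x\F$; set $M:=\max\{M_0,K\}$. I claim $u^k\lt M$ on all of $[0,+\infty)\x\ovl\F$. On $[0,T]$ this holds by the choice of $M_0$, so only the range $t\gt T$ requires work.

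For $t\gt T$ I would compare $u^k$ with the constant $M$. Writing $w:=u^k-M$ and using $\lap_x w=\lap_x u^k$ together with the exact identity
\[
f(u^k)=f(M)+w\,\big[\beta_k-\gm_{kk}(u^k+M)\big],
\]
the subsolution inequality turns into
\[
\pl_t w \lt d_k\lap_x w + c\,w + f(M),\qquad c:=\beta_k-\gm_{kk}(u^k+M).
\]
The crucial point is that $M\gt K$ forces both $f(M)=M(\beta_k-\gm_{kk}M)\lt0$ and $c\lt0$: since $u^k\gt0$ and $\gm_{kk}\gt0$ one has $\gm_{kk}(u^k+M)\gt\gm_{kk}M\gt\gm_{kk}K\gt\beta_k$, so $c$ is non-positive regardless of the size of $u^k$. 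Dropping the non-positive term $f(M)$ leaves the linear inequality $\pl_t w\lt d_k\lap_x w + c\,w$ with a non-positive zeroth-order coefficient.

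Finally I would apply the weak maximum principle to $w$ on each finite slab $[T,T']\x\ovl\F$. On the parabolic boundary $w\lt0$: on $\pl\F$ because $u^k=0$, and at $t=T$ because $u^k(T,\cdot)\lt M_0\lt M$. Were $w$ to attain a positive maximum at a point $(t_0,x_0)$ with $x_0\in\F$ and $t_0\in(T,T']$, then $\pl_t w\gt0$, $\lap_x w\lt0$ and $c\,w\lt0$ there; the standard perturbation $w-\eps(t-T)$ with $\eps\downarrow0$ upgrades this to a genuine contradiction with the differential inequality. Hence $u^k\lt M$ on $[T,T']\x\ovl\F$, and since $M$ does not depend on $T'$, letting $T'\to+\infty$ and combining with the bound on $[0,T]$ gives $u^k\lt M$ on $[0,+\infty)\x\ovl\F$. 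The only genuine obstacle is making the semilinear comparison rigorous over an unbounded time interval; this is handled precisely by the exact factorization, which reduces it to a linear inequality with a sign-definite zeroth-order term, and by running the maximum principle on finite slabs with a bound independent of the terminal time.
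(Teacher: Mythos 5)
Your proof is correct, and it is built around the same comparison constant as the paper's --- essentially $M=\max\big\{\sup_{[0,T]\x\ovl\F}u^k,\ \sup_{[T,+\infty)\x\F}\beta_k/\gm_{kk}\big\}$ --- but it certifies the bound $u^k\lt M$ by a genuinely different mechanism. The paper keeps the competition terms: it writes out the parabolic equation satisfied by $\bar u^k=M-u^k$, observes that the free term $M\big(\gm_{kk}M-\beta_k+\sum_{i\ne k}\gm_{ki}u^i\big)$ is non-negative exactly because $M\gt\beta_k/\gm_{kk}$ and the other components are non-negative, and then invokes Theorem~\ref{l1111} (the $L^2$ integration-by-parts non-negativity argument) with initial time $T$ to conclude $\bar u^k\gt 0$. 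You instead discard the non-positive competition terms up front to obtain a scalar logistic differential inequality, use the exact factorization $f(u^k)-f(M)=w\,[\beta_k-\gm_{kk}(u^k+M)]$ with $w=u^k-M$ to reduce to a linear inequality with non-positive zeroth-order coefficient, and run the classical weak maximum principle with the $\eps(t-T)$ perturbation on finite slabs. Both routes are sound, and your sign checks (including that $c\lt 0$ and $f(M)\lt 0$ only need to hold for $t\gt T$, which is where you apply the maximum principle) are consistent. What each buys: your argument is pointwise and self-contained, so it does not rely on the hypotheses under which Theorem~\ref{l1111} was established (uniform ellipticity constant, boundedness of $\pl_{u_k}c^k$ on the relevant regions, etc.); the paper's version is shorter on the page because it reuses machinery already proved, and by treating the components $u^i$, $i\ne k$, as frozen non-negative coefficients it avoids the subsolution/factorization step entirely.
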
 
\begin{proof}
Define 
$m_k = \max\big\{\sup_{\ovl \F_T} u^k, \sup_{[T,+\infty)\x \F} 
\big(\beta_k / \gm_{kk}\big)\big\}$. Then, $\bar u^k = m _k- u^k$ satisfies
the equation
\aa{
\pl_t \bar u^k \! = d_k \lap_x \bar u^k \!+ \bar u^k\Big(\beta_k - \gm_{kk} (u^k + m_k) -
\sum_{i\ne k}^m \gm_{ki} u^i\Big)\!
+ m_k\Big(\gm_{kk}m_k-\beta_k + \sum_{i\ne k}^m \gm_{ki} u^i\Big).
}
Consider the above equation with respect to the initial moment $T$ and
note that $\bar u^k(T,x)\gt 0$.
By Theorem \ref{l1111}, $\bar u^k =  m_k - u^k \gt 0$ on
$[0,+\infty)\x \F$, which completes the proof. 
\end{proof}
\begin{lem}
Assume that $\beta_k$, $\gm_{ki}$, $i,k = 1,\ldots, m$,  
are continuous functions $[0,+\infty) \x \ovl \F \to [0,+\infty)$,
which moreover are H\"older continuous in $x\in\F$ uniformly 
with respect to $t\in [0,T]$ for all $T>0$,
%varying over compact intervals of $[0,+\infty)$,
 %vanishing on $\pl \F$, 
 and such that $\int_0^\infty \sup_{x\in \ovl \F} \beta_k(t,x) dt <+\infty$.
Further assume that $\phi^k: \ovl \F \to [0,+\infty)$ is continuous.
%and vanishing on $\pl \F$. 
Let $u = (u^1, \ldots, u^m)$ be a $\C^{1,2}([0,+\infty)\x\ovl \F)$-solution
to problem  \rf{lv-nd} such that $u^i\gt 0$ for all
$i=1,\ldots, m$. Then, the component $u^k$ is bounded on $[0,+\infty)\x \F$.
\end{lem}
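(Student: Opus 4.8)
The plan is to control $u^k$ by a space-independent barrier that solves the reaction equation with the competition terms dropped, and then to strip off the resulting linear growth factor by an exponential change of variable so that an ordinary parabolic maximum principle applies.

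First I would exploit the sign structure of \rf{lv-nd}. Because every component $u^i$ is non-negative and every coefficient $\gm_{ki}$ is non-negative, the reaction term in the $k$-th equation obeys
\[
u^k\Big(\beta_k-\gm_{kk}u^k-\sum_{i\ne k}^m\gm_{ki}u^i\Big)\lt\beta_k\,u^k\lt\beta_k^*(t)\,u^k,
\qquad \beta_k^*(t):=\sup_{x\in\ovl\F}\beta_k(t,x).
\]
Thus $u^k$ is a classical subsolution of the linear problem $\pl_t w=d_k\lap_x w+\beta_k^*(t)\,w$ on $\F_T$ for every $T>0$, carrying the Dirichlet data $0$ on $\pl\F$ and the initial data $\phi^k$.

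Next I would remove the zeroth-order coefficient. Setting $B(t):=\int_0^t\beta_k^*(s)\,ds$, which is of class $\C^1$ since $\beta_k^*$ is continuous (a maximum of the jointly continuous $\beta_k$ over the compact set $\ovl\F$), I introduce $v:=e^{-B(t)}u^k$. As $e^{-B(t)}$ is independent of $x$, a one-line computation gives
\[
\pl_t v=e^{-B(t)}\big(\pl_t u^k-\beta_k^*(t)\,u^k\big)\lt d_k\lap_x v,
\]
so $v$ is a classical subsolution of the heat operator on $\F_T$, still with vanishing lateral data and initial data $\phi^k$. The weak maximum principle for $\pl_t-d_k\lap_x$—which needs no sign hypothesis once the potential has been eliminated—then yields $\sup_{\ovl\F_T}v\lt\sup_{\ovl\F}\phi^k$, the lateral boundary contributing nothing because $u^k=0$ on $\pl\F$.

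Undoing the substitution, $u^k(t,x)=e^{B(t)}v(t,x)\lt e^{B(t)}\sup_{\ovl\F}\phi^k$ on $\F_T$; letting $T\to+\infty$ and using $B(t)\lt\int_0^\infty\sup_{x\in\ovl\F}\beta_k(t,x)\,dt<+\infty$ together with the finiteness of $\sup_{\ovl\F}\phi^k$ (continuity of $\phi^k$ on the compact $\ovl\F$) gives the uniform bound on $[0,+\infty)\x\F$. The only genuinely delicate point is the middle step: the original equation carries the non-negative potential $\beta_k$, for which a comparison argument is not automatic, and the exponential substitution is precisely the device that converts it into the clean heat operator. As an alternative that stays closer to the preceding lemma, one could instead apply Theorem \ref{l1111} to the difference $e^{B(t)}\sup_{\ovl\F}\phi^k-u^k$, checking that its zeroth-order term is non-negative when the difference vanishes.
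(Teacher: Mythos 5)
Your proof is correct, but it follows a genuinely different route from the paper's. The paper represents $u^k$ by the heat-kernel (Duhamel) formula via Friedman's theorems --- which forces it to extend $\phi^k$ and the reaction term $f_k$ by zero outside $\ovl\F$, verify the local H\"older continuity of $f_k$ in $x$ required by those theorems, and then bound $f_k \lt \beta_k u^k$ inside the integral and close the estimate with Gronwall's inequality. You instead observe the same sign inequality $f_k \lt \beta_k^*(t)\,u^k$ at the level of the PDE, strip off the potential with the substitution $v = e^{-B(t)}u^k$, and invoke the weak maximum principle for $\pl_t - d_k\lap_x$ on the parabolic boundary. Both arguments land on the identical bound $\sup_{\ovl\F}\phi^k\, e^{\int_0^\infty\sup_x\beta_k\,dt}$. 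What your version buys: it bypasses the extension-by-zero and the H\"older-continuity verification entirely (indeed, the hypothesis that the coefficients be H\"older continuous in $x$ is used in the paper only to legitimize the integral representation, and is not needed for your argument), and every step --- continuity of $\beta_k^*$ by compactness of $\ovl\F$, the one-line computation for $v$, the vanishing lateral data --- checks out. What the paper's version buys is the explicit representation formula itself, which is in the spirit of the Green-function machinery used elsewhere in the text. Your closing alternative, applying Theorem \ref{l1111} to $e^{B(t)}\sup_{\ovl\F}\phi^k - u^k$, is also viable and mirrors the paper's preceding lemma (the barrier $m_k - u^k$), so either variant would integrate naturally.
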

\begin{proof}
By \cite[Theorems 12 and 16, Chapter 1]{friedman}, 
\aaa{
\lb{fs-1111}
u^k(t,x) = \int_{\Rnu^n} p_t(x-y) \phi^k(y) dy + \int_0^t
\int_{\Rnu^n} p_{t-s}(x-y)f_k(s,y)ds dy,
}
where 
\aa{
p_t(x) = (2\pi d_k t)^{-\frac{n}2} \exp\{-|x|^2 / (2d_k t)\} \quad
\text{and} \quad 
f_k = u_k(\beta_k - \gm_{kk} u^k - \sum_{i\ne k}^m \gm_{ki} u^i).
}
Indeed, the functions $\phi^k$ and $f_k$ vanish on $\pl \F$, and therefore, 
can be extended by zero outside of $\ovl\F$, 
so problem \rf{lv-nd} can be viewed as the Cauchy problem
\aa{
\pl_t u^k - d_k \lap_x u^k = f_k,\quad
u^k(0,x) = \phi^k(x).
}
It is worth noticing that the local H\"older continuity of $f_k$ in $x\in \Rnu^n$ 
uniformly in $t\in [0,T]$ (for all $T>0$) is a part of the requirements 
of \cite[Theorem 12, Chapter 1]{friedman}. In our situation, it
is implied by the following argument. Recall that $u^k$ is extended
by zero to $\Rnu^n$.
Take two points $x,z\in \Rnu^n$ such that $x\in\F$, $z\notin \F$
and define 
\aaa{
\lb{g1111}
g_k = \beta_k - \gm_{kk} u^k - \sum_{i\ne k}^m \gm_{ki} u^i.
}
Let $y = [x,z]\cap \pl\F$.
We have
\aa{
&f_k(t,x) - f_k(t,z) = u^k(t,x) g_k(t,x) = g_k(t,x) (u^k(t,x)-u^k(t,y)),\\
&|f_k(t,x) - f_k(t,z)| \lt  K |x-y| \lt K|x-z|, \quad 
\text{where} \;\; K = \sup_{[0,T]\x\ovl \F} |g_k \,\pl_x u^k|.
}
Note that if $x,z\in \F$ or $x,z\notin \F$, then the above inequality is obvious.
Equation  \rf{fs-1111} implies the estimate
\mm{
u^k(t,x)  \lt \int_{\F} p_t(x-y) \phi^k(y) dy 
+ \int_0^t\int_{\F} p_{t-s}(x-y)\beta_k(s,y) u^k(s,y)dy ds\\ \lt
\sup_\F|\phi^k| + \int_0^t \sup_{\F} u^k(s,\fdot) \sup_\F\beta_k(s,\fdot) ds.
}
By Gronwall's inequality,\\
\phantom{j} \hspace{1.7cm} $\sup\limits_{\F} u^k(t,x) \lt \sup\limits_\F|\phi^k|\, e^{\int_0^t \sup\limits_\F\beta_k(s,\fdot) ds}
\lt \sup\limits_\F|\phi^k| \,e^{\int_0^\infty \sup_\F\beta_k(s,\fdot) ds}.$ 
\end{proof}
In the following theorem, we assume the boundedness of the $\C^{1,2}$-solution $u$ to \rf{lv-nd}
on $[0,+\infty)\x \F$. This boundedness is implied, for instance,
by the sufficient conditions for the boundedness of separate components
obtained above.
\begin{thm}
Let $\beta_k$, $\gm_{ki}$, $i,k = 1,\ldots, m$,  
be bounded continuous functions $[0,+\infty) \x \F \to [0,+\infty)$ and let
$u = (u^1, \ldots, u^m)$ be a bounded $\C^{1,2}([0,+\infty)\x\ovl \F)$-solution
to problem  \rf{lv-nd} such that $u^i\gt 0$ for all $i=1,\ldots, m$. 
Assume
$\beta_k(t,x) \to 0$ as $t\to+\infty$ uniformly in $x\in \ovl \F$. Then,
$u^k(t,x)\to 0$ as $t\to+\infty$ uniformly in $x\in \F$
\end{thm}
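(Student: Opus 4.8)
The plan is to use the non-negativity of all the components, together with the non-negativity of the coefficients, to trap $u^k$ between $0$ and a subsolution of a \emph{linear} Dirichlet problem, and then to exploit the spectral gap of the Dirichlet Laplacian on the bounded domain $\F$ to force exponential decay once $\beta_k$ has become small.

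First I would record the differential inequality satisfied by $u^k$. Since $u^i\gt 0$ for every $i$ and all the $\gm_{ki}$ are non-negative, the term $u^k\big(\gm_{kk}u^k+\sum_{i\ne k}\gm_{ki}u^i\big)$ is non-negative, so the reaction term in \rf{lv-nd} obeys $u^k(\beta_k-\gm_{kk}u^k-\sum_{i\ne k}\gm_{ki}u^i)\lt \beta_k u^k$ on $[0,+\infty)\x\F$. Hence $u^k$ is a non-negative subsolution of $\pl_t w-d_k\lap_x w=\beta_k w$ with homogeneous Dirichlet condition on $\pl\F$, and it is bounded by $M:=\sup_{[0,+\infty)\x\F}u^k<+\infty$.

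Next I would bring in the principal Dirichlet eigenpair $(\la_1,\psi)$ of $-\lap$ on $\F$, with $\la_1>0$, $\psi>0$ in $\F$, $\psi=0$ on $\pl\F$, and $-\lap\psi=\la_1\psi$. Because $\beta_k(t,x)\to 0$ uniformly in $x$, I fix $\eps:=\tfrac12 d_k\la_1$ and choose $T>0$ so large that $\beta_k(t,x)\lt\eps$ for all $t\gt T$, $x\in\F$; on $[T,+\infty)\x\F$ the subsolution inequality sharpens to $\pl_t u^k-d_k\lap_x u^k\lt\eps\,u^k$. I then compare $u^k$ with $\bar w(t,x):=C\,e^{-\mu(t-T)}\psi(x)$, where $\mu:=d_k\la_1-\eps=\tfrac12 d_k\la_1>0$: a direct computation gives $(\pl_t-d_k\lap_x-\eps)\bar w=(-\mu+d_k\la_1-\eps)\bar w=0$, so $\bar w$ is an exact solution, hence a supersolution, of the operator $\pl_t-d_k\lap_x-\eps$, and it vanishes on $\pl\F$. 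Setting $z:=\bar w-u^k$, one has $(\pl_t-d_k\lap_x-\eps)z=\eps u^k-f_k\gt 0$ on $[T,+\infty)\x\F$ (with $f_k$ the reaction term), $z=0$ on $\pl\F$, and, provided $C$ is chosen so that $C\psi\gt u^k(T,\fdot)$ on $\ovl\F$, also $z(T,\fdot)\gt 0$. Applying the weak maximum principle to $\tilde z:=e^{-\eps t}z$, which is a supersolution of the plain heat operator $\pl_t-d_k\lap_x$, gives $z\gt 0$ on every slab $[T,T']\x\F$ and hence $u^k\lt\bar w$ on $[T,+\infty)\x\F$. Since $0\lt u^k(t,x)\lt C\|\psi\|_\infty e^{-\mu(t-T)}$ uniformly in $x$, the conclusion $u^k\to 0$ uniformly follows.

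The one genuinely delicate step is the choice of $C$ realizing $C\psi\gt u^k(T,\fdot)$ up to the boundary: both functions vanish on $\pl\F$, so an interior bound $C\gt M/\min_{d(x,\pl\F)\gt\delta}\psi$ must be matched by comparable boundary behaviour. I would obtain this from Hopf's lemma for $\psi$, giving $\psi(x)\gt c\,d(x,\pl\F)$ near $\pl\F$, together with the $\C^1$-regularity of $u^k(T,\fdot)$ up to $\pl\F$ (a consequence of $u\in\C^{1,2}([0,+\infty)\x\ovl\F)$), giving $u^k(T,x)\lt L\,d(x,\pl\F)$ with $L=\sup_{\ovl\F}|\pl_x u^k(T,\fdot)|$; then $C=\max\{M/\min\psi,\,L/c\}$ works. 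The main obstacle is therefore not the comparison itself but justifying this boundary matching on a merely piecewise-smooth $\F$: near the corners, Hopf's lemma and the distance estimate for $\psi$ should be re-derived by explicit local barriers that use the non-zero interior-angle hypothesis on $\pl\F$. An alternative sidestepping $C$ entirely is to compare $u^k$ on $[T,+\infty)$ with the solution of $\pl_t W=d_k\lap_x W+\eps W$, $W(T,\fdot)=u^k(T,\fdot)$, whose $L^2$-decay at rate $e^{-\mu(t-T)}$ is immediate from the eigenfunction expansion; but upgrading that to the uniform decay claimed here would then require an ultracontractivity estimate for the Dirichlet heat semigroup, so I would prefer the elementary barrier route above.
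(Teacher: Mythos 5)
Your argument is correct in its overall strategy but takes a genuinely different, self-contained route from the paper. The paper's proof is two lines: it rewrites the equation as $\pl_t u^k = d_k\lap_x u^k + u^k g_k$ with $g_k = \beta_k - \gm_{kk}u^k - \sum_{i\ne k}\gm_{ki}u^i$, observes that $g_k$ is bounded and $\limsup_{t\to+\infty} g_k \lt 0$ uniformly in $x$ (which is exactly your first step), and then invokes Friedman's Theorem 1 of Chapter 6 as a black box. What you do is essentially prove the relevant case of that cited theorem: the decay comes from the Dirichlet spectral gap $d_k\la_1>0$ beating the asymptotically non-positive potential, implemented through the explicit supersolution $C e^{-\mu(t-T)}\psi$. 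Your version buys an elementary argument and a quantitative (exponential) rate of decay, at the price of having to verify the comparison on the whole parabolic boundary; the paper buys brevity at the price of an external citation.

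The one step that does not survive as written is the boundary matching $C\psi\gt u^k(T,\fdot)$ when $\psi$ is the principal eigenfunction of $\F$ itself. You correctly flag this as the delicate point, but the proposed repair (local barriers recovering a Hopf-type bound at the corners) cannot work, because the estimate $\psi(x)\gt c\,d(x,\pl\F)$ is genuinely false near a convex corner of a piecewise-smooth domain: in a sector of opening $\om<\pi$ the Dirichlet eigenfunction vanishes like $r^{\pi/\om}$ with $\pi/\om>1$, i.e.\ faster than the distance to the boundary, whereas the $\C^1$ bound $u^k(T,x)\lt L\,d(x,\pl\F)$ is only linear. The clean fix is to take instead the principal Dirichlet eigenpair $(\la_1(B),\psi_B)$ of a ball $B\supset\ovl\F$: then $\psi_B\gt \min_{\ovl\F}\psi_B>0$ on all of $\ovl\F$, so $C=M/\min_{\ovl\F}\psi_B$ gives $C\psi_B\gt u^k(T,\fdot)$ with no boundary analysis at all; the function $\bar w=Ce^{-\mu(t-T)}\psi_B$ with $\eps=\tfrac12 d_k\la_1(B)$ and $\mu=d_k\la_1(B)-\eps$ still satisfies $(\pl_t-d_k\lap_x-\eps)\bar w=0$ pointwise on $\F\sub B$, and $\bar w\gt 0=u^k$ on $\pl\F$. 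With that substitution the rest of your comparison argument goes through verbatim and requires no regularity of $\pl\F$ beyond what the paper assumes.
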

\begin{proof}
Rewrite the equation for $u^k$ as follows:
\aa{
\pl_t u^k = d_k \lap_x u^k + u^k g_k,
}
where $g_k$ is defined by \rf{g1111}. Let us observe that 
$\limsup_{t\to+\infty} g_k(t,x) \lt 0$ uniformly in $x\in \ovl \F$
and that $g_k$ is bounded on $[0,+\infty) \x \ovl \F$. 
The conclusion then holds by \cite[Theorem 1, Chapter 6]{friedman}.
\end{proof}

\subsection{Asymptotic behavior for the 2-species 
Lotka-\!Volterra competition model}
In this subsection, we prove Theorem \ref{th-lv2222}
that describes the asymptotic behavior
of the solution $(u,v)$ to system \rf{lv1111}.
\begin{proof}[Proof of Theorem \ref{th-lv2222}]
 It follows from the assumptions that the functions $f_1(t,x) =u(\beta- \gm u - \dl v)(t,x)$ and 
$f_2(t,x) =v(\rho - \sg u - \te v)(t,x)$ are of class 
$\C^{\frac\al2,\al}(\ovl \F_T)$ for all $T>0$.
%Since $u\in\C^{1+\frac\al2,2+\al}(\ovl F_T)$,
By  \cite[Theorem 10, Chapter 3]{friedman}, the solution $(u,v)$ and its laplacian
$(\lap_x u, \lap_x v)$ are differentiable in $t$ and the derivatives $\pl_t \lap_x u$ and
$\pl_t \lap_x v$ are continuous. Therefore
$\hat u = \pl_t u$ and $\hat v =  -\pl_t v$ solve the system 
 \eqq{
\pl_t \hat u = d_1 \lap_x \hat u + \hat u(\beta- 2\gm u - \dl v) + \dl u \hat v
+ u(\pl_t \beta - \pl_t \gm \, u - \pl_t \dl\,  v),\\
\pl_t \hat v = d_2 \lap_x \hat v + \hat v(\rho - \sg u - 2\te v) + \sg v \hat u
+ v( \pl_t \sg\, u + \pl_t \te\, v - \pl_t \rho).
}
%Remark that since $(u,v)$ is a $\C^{1,2}$-solution, the right-hand side
%of \rf{lv1111} is differentiable in $t$. Therefore, $\pl_t u$ and $\pl_t v$
%are differentiable in $t$, and the above system is well-defined.
By Theorem \ref{l1111}, $\pl_t u = \hat u \gt 0$ 
and $\pl_t v = - \hat v \lt 0$ on $[0,+\infty)\x \F$.
Therefore, for each $x\in \F$, the function $t\mto u(t,x)$ is non-decreasing,
and the function $t\mto v(t,x)$ is non-increasing. Since $(u,v)$ is a bounded
solution, the pointwise limits \rf{limits} exist.
Let us show now that $(\bar u,\bar v)$ is a weak solution to \rf{el-1111}.
Take two test functions $\eta(x)$ and $\zeta(x)$ 
such that that ${\rm  supp\;} \eta \sub \F$, ${\rm  supp\;} \zeta \sub \F$.
%and the both supports are compact.
Furthermore, take a test function $\tet(t)$ of variable $t$ compactly supported on 
$(0,+\infty)$ and such that  $\int_0^{\infty} \tet(t) dt \ne 0$.
Let us show that we can pass to the limit as $t\to +\infty$ in the equation
for $u(t,x)$. 

For $\tau>0$, introduce the functions 
$u_\tau(t,x) = u(\tau+t,x)$ and $v_\tau(t,x)= v(\tau+t,x)$ 
and note that $(u_\tau,v_\tau)$ solves the equation
\aa{
\pl_t u_\tau = d_1 \lap_x u_\tau + u_\tau(\beta_\tau- \gm_\tau u_\tau - \dl_\tau v_\tau),
}
where the functions $\beta_\tau,\gm_\tau,\dl_\tau$
%$\rho_\tau,\sg_\tau,\te_\tau$ 
are defined
in the similar manner as $u_\tau$; that is, $\beta_\tau(t,x) = \beta(t+\tau,x)$, etc.
%and the other functions are defined likewise. 
Multiplying the equation for $u_\tau$  by $\eta(x)$, integrating over $\F$, 
%we obtain
%\aa{
%\int_\F \pl_t u\, \eta dx = \int_G
%\big(d_1  \lap \eta \,u + \eta\, u(\beta- \gm u - \dl v)\big) dx.
%}
and next, multiplying the resulting equation
by $\tet(t)$ and integrating over $[0,+\infty)$, by Fubini's theorem,
we obtain 
\aa{
-\int_\F \eta \int_0^{\infty} u_\tau \,\pl_t \tet dt \,  dx = \int_0^{\infty} \tet \int_\F
\big(d_1  \lap_x \eta \,u_\tau + \eta\, u_\tau(\beta_\tau- \gm_\tau u_\tau 
- \dl_\tau v_\tau)\big) dx \, dt.
}
By boundedness of the solution $(u,v)$ and since
$0\lt \beta_\tau \lt \bar \beta$, $\bar \gm \lt \gm_\tau \lt \gm(0,\fdot)$,
$\bar \dl \lt \dl_\tau \lt \dl(0,\fdot)$, we can pass to the limit under the
integral sign as $\tau\to+\infty$. We obtain 
\aa{
0 = - \int_\F \eta \,\bar u\, dx \, \big(\left.\tet\right|^{\infty}_0\big) 
=\int_0^\infty \tet dt \int_\F \big(d_1 \lap_x \eta\, \bar u  
+ \eta\, \bar u (\bar \beta- \bar \gm\bar u - \bar \dl\bar v) \big) dx.
}
Since, $\int_0^\infty \tet dt \ne 0$, $(\bar u,\bar v)$ solves 
the first equation in \rf{weak1111}.
The equation for $v$ is analyzed likewise.
The theorem is proved. 
\end{proof}
\begin{rem}
\rm
Note that unlike the previous subsection, here we do not use the existence
of a boundary function 
\eqq{
u(0,x) = \ffi(x), \quad v(0,x) = \psi(x), \quad x\in \F, \\
0, \quad (t,x) \in  [0,+\infty) \x \pl \F.
}
Also remark that if the initial condition is given, the assumption
$\pl_t u(0,x)\gt 0$, $\pl_t v(0,x)\lt 0$ can be verified as follows:
$d_1 \lap_x \ffi + \ffi(\beta- \gm \ffi - \dl \psi) \gt 0$ and
 $d_2 \lap_x \psi + \psi(\rho - \sg \ffi - \te \psi) \lt 0$.
 \end{rem}

\end{document}